\newcommand{\A}{\mathbf A}
\newcommand{\D}{\mathbf D}
\newcommand{\R}{\mathbb R}
\newcommand{\N}{\mathbb N}
\newcommand{\T}{\mathbb T}
\newcommand{\DD}{\mathcal D}
\newcommand{\EE}{\mathcal E}
\newcommand{\dd}{\mathrm d}
\def\({\left(}
\def\){\right)}
\def\<{\left\langle}
\def\>{\right\rangle}
\def\le{\leqslant}
\def\ge{\geqslant}
\def\leq{\leqslant}
\def\geq{\geqslant}
\def\Eq#1#2{\mathop{\sim}\limits_{#1\rightarrow#2}}
\def\Tend#1#2{\mathop{\longrightarrow}\limits_{#1\rightarrow#2}}
\DeclareMathOperator{\Div}{div}
\DeclareMathOperator{\IM}{Im}
\newcommand{\eps}{\varepsilon}
\renewcommand{\d}{\mathrm{d}}
\newcommand{\dt}{\mathrm{d}t}
\newcommand{\dx}{\mathrm{d}x}
\newcommand{\dy}{\mathrm{d}y}
\newcommand{\ro}{\rho}
\newcommand{\si}{\sigma}
\theoremstyle{plain}
\newtheorem{theorem}{Theorem} [section]
\newtheorem{lemma}[theorem]{Lemma}
\newtheorem{proposition}[theorem]{Proposition}
\theoremstyle{remark}
\newtheorem{remark}[theorem]{Remark}
\theoremstyle{definition}
\newtheorem{definition}[theorem]{Definition}
\newtheorem*{notations}{Notations}
\numberwithin{equation}{section}
\title[Large-time behavior of compressible fluids]{Large-time behavior of compressible polytropic fluids and nonlinear Schr\"odinger equation}
\author[R. Carles]{R\'emi Carles}
\author[K. Carrapatoso]{Kleber Carrapatoso}
\author[M. Hillairet]{Matthieu Hillairet}
\address[R. Carles]{Univ Rennes, CNRS\\ IRMAR - UMR 6625\\ F-35000
  Rennes, France} 
\address[K. Carrapatoso]{CMLS\\\'Ecole
  polytechnique\\ Institut Polytechnique de Paris\\ 91128 Palaiseau
  cedex\\ France}
\address[M. Hillairet]{Institut Montpelli\'erain
  Alexander Grothendieck\\ Univ. Montpellier\\ CNRS\\ Montpellier\\ France}
\email{Remi.Carles@math.cnrs.fr}
\email{kleber.carrapatoso@polytechnique.edu}
\email{Matthieu.Hillairet@umontpellier.fr}
\begin{document}

\begin{abstract}
In this paper we analyze the large-time behavior of weak solutions to polytropic fluid models possibly including quantum and viscous effects.  Formal a priori estimates show that the density of solutions to these systems should disperse with time. Scaling appropriately the system, we prove that, under a reasonable assumption on the decay of energy, the density of weak solutions converges in large times to an unknown profile. In contrast with the isothermal case,
we also show that there exists a large variety of asymptotic
profiles. We complement the study by providing existence of
global-in-time weak solutions satisfying the required decay of
energy. As a byproduct of our method, we also obtain results
concerning the large-time behavior of solutions to nonlinear
Schr\"odinger equation, allowing the presence of a semi-classical
parameter as well as long range nonlinearities.
\end{abstract}

\thanks{RC acknowledges support from Rennes M\'etropole, through its AIS program. KC acknowledges support from the Project EFI (ANR-17-CE40-0030) of the French National Research Agency (ANR). MH acknowledges support from Institut Universitaire de France and   SingFlows project (ANR-18- CE40-0027) of the French National Research Agency (ANR)}
\maketitle

\tableofcontents

%%%%%%%%%%%%%%%%%%%%
\section{Introduction}
\label{sec:intro}

We consider isentropic compressible fluid models describing the evolution of the density and velocity field of a fluid on the whole space $\R^d$, given by 
\begin{align}
& \partial_t \ro + \Div(\ro u) = 0, \label{fluide1} \\
& \partial_t(\ro u) + \Div (\ro u \otimes u) + \nabla P(\ro)
= \Div \left( \frac{\eps^2}{2} \mathbb K [\ro]
+ \nu  \sqrt{\ro} \mathbb S[\ro,u] \right)   . \label{fluide2}
\end{align}
Here $t \in \R_+$ is the time variable and $x \in \R^d$ is the spatial
variable, $\ro = \ro(t,x) : \R_+ \times \R^d \to \R_+$ is the density
of the fluid, $u=u(t,x) : \R_+ \times \R^d \to \R^d$ is the velocity
field, and $P(\ro)$ is the pressure which depends on the density. The
terms appearing on the right-hand side of \eqref{fluide2} account for
quantum correction and diffusion term, respectively. More precisely, we consider constants $\eps \ge 0$ and $\nu \ge 0$ and 
\begin{equation}\label{tensorK}
\frac12 \mathbb K[\ro] = \frac14 \ro \nabla^2 \ln \ro = \frac12 (\sqrt \ro \nabla^2 \sqrt \ro - \nabla \sqrt \ro \otimes \nabla \sqrt \ro),
\end{equation}
\begin{equation}\label{tensorNS}
\mathbb S [\ro,u] = \sqrt{\ro}\D u   ,
\end{equation}
where $\D u = \tfrac12 (\nabla u + \nabla u^\top)$ denotes the
symmetric part of the gradient. We also denote below
$\A u = \tfrac12 (\nabla u -
\nabla u^\top)$ the skew-symmetric one. We emphasize that
we have split the term $\rho \D u = \sqrt{\rho}
\sqrt{\rho} \D  u$ in preparation for the following computations.  We emphasize also that we allow the
parameters $\eps$ and $\nu$ to vanish separately or
simultaneously. We will classically refer to the equations obtained according to
these cases as:
\begin{itemize}
\item Euler: $\eps=\nu=0$.
\item Euler-Korteweg: $\eps>0=\nu$.
\item Navier-Stokes (with degenerate viscosity): $\nu>0=\eps$.
\item Quantum Navier-Stokes  (with degenerate
    viscosity): $\nu>0$ and $\eps>0$. 
\end{itemize}
As for the pressure, we shall consider hereafter laws corresponding to polytropic fluids
\begin{equation}\label{pressure} 
P(\ro) = \ro^\gamma \quad\text{with}\quad \gamma >1.
\end{equation}
We refer the reader to \cite{CCH2,VasseurYu,AntonelliHientzschSpirito} for more details on the modeling.
In previous studies \cite{CCH1,CCH2}, we focused on the particular
case $\gamma=1$, which corresponds to isothermal fluids. Our main motivation herein is to show that some tools
of these former analyses extend to the polytropic case and yield
relevant information on the large-time behavior of the density $\ro$.

\subsection{Main results} System \eqref{fluide1}--\eqref{fluide2} has, at least formally, two fundamental properties: the conservation of mass
\begin{equation} \label{eq_consmass}
\int_{\R^d} \ro(t,x) \, \dx = \int_{\R^d} \ro_0(x) \, \dx \quad \forall t \ge 0,
\end{equation}
and the energy identity
\begin{equation}\label{dEdt}
E[\ro,u](t) + \nu \int_0^t D[\ro,u](s) \, \d s = E_0 \quad \forall t \ge 0,
\end{equation}
where the energy $E=E[\ro,u]$ is defined by 
\begin{equation}\label{energy}
E[\ro,u] :=  \frac12 \int_{\R^d} (\ro |u|^2 + \eps^2 |\nabla \sqrt \ro|^2 )  \, \dx + \frac{1}{\gamma-1}\int_{\R^d} \ro^\gamma \, \dx,
\end{equation}
and its dissipation by 
\begin{equation}\label{dissipation}
D[\ro,u] := \nu \int_{\R^d} \ro |\D u|^2 \, \dx.
\end{equation}

For sufficiently localized solutions  $(\ro , u)$ of \eqref{fluide1}--\eqref{fluide2} we have also at hand
alternative functionals which are well-suited for the study of large-time behavior. First, we observe that 
\[
\begin{aligned}
\frac{\d}{\dt}  \int_{\R^d} \ro |x|^2
= -\int_{\R^d} \Div(\ro u) |x|^2 =2 \int_{\R^d} \ro u \cdot x,
\end{aligned}
\]
and Cauchy-Schwarz inequality  yields
\[
\begin{aligned}
\frac{\d}{\dt}  \int_{\R^d} \ro |x|^2
\le 2\left( \int_{\R^d} \ro |x|^2\right)^{1/2} \left( \int_{\R^d} \ro |u|^2\right)^{1/2} .
\end{aligned}
\]
In view of  the above energy identity, we infer, for all $t\ge 0$,
\begin{equation}\label{rho:moment2}
 \int_{\R^d} \ro |x|^2 (t) \le C(E_0)(1+t^2),
\end{equation}
for some constant $C(E_0)>0$ depending on the energy $E_0$ of the initial data. Second, defining the functional
\[
B[\ro,u] := \frac12 \int_{\R^d} \left( \ro \left|u - \frac{x}{t}\right|^2 + \eps^2 |\nabla \sqrt \ro|^2  \right)+ \frac{1}{\gamma-1}\int_{\R^d} \ro^\gamma,
\]
we may adapt computations of  \cite[Section 4]{Serre97} (see also
\cite{Xin98}) to obtain that, for some constant $C>0$:
\begin{equation}\label{rhou:decay}
B[\ro,u](t) \le \frac{C(E_0)}{(1+t)^{\min(2,d(\gamma-1))}} + \frac{C \nu }{1+t}, \qquad \forall \, t >0.
\end{equation}
For completeness we provide an exhaustive proof of this estimate in Appendix \ref{sec:computations}. 
From \eqref{rhou:decay} we infer that the $L^{\gamma}$-norm of $\ro$ has to decay to zero with time.
Since the total mass is conserved in view of \eqref{eq_consmass}, we
expect the density of solutions to \eqref{fluide1}-\eqref{fluide2} to
disperse. To give relevant information on the asymptotic state of the
density, we consider the ($L^1$-unitary in space) rescaling
\[
R(t,x) = \tau^d(t) \ro(t,\tau(t) x) ,\quad \forall \,t >0,
\]
for some well-chosen scaling-parameter family $t \mapsto \tau(t).$ In order to compel with the 
energy bounds \eqref{rho:moment2}-\eqref{rhou:decay}, one notices that a natural choice is 
\begin{equation} \label{eq:astau}
\tau(t) \sim t.
\end{equation}
 The precise choice of $\tau$ will be motivated by the fact that our approach relies on compactness properties requiring a priori estimates of the form \eqref{rhou:decay}. Typically, in the case
$\nu=0$, we use different choices whether $\gamma$ is smaller or
larger than $1+2/d$. 

\medskip

In the case of isothermal models $\gamma=1$, we showed that, by choosing the
scaling $\tau$ as solution to an appropriate differential equation
(which can be inferred {\em via} a parallel with nonlinear Schr\"odinger
equations), we may not only analyze the asymptotics of solutions to
\eqref{fluide1}-\eqref{fluide2} but also identify new energy estimates
crucial to the construction of a Cauchy theory for this system (see
\cite{CCH1,CCH2}). Such ideas were already hinted in
\cite{Serre97}. 
Following these previous approaches, we propose to introduce the
scaling function $\tau$ as follows. We fix $\alpha >0$ and compute
$\tau : \R_+ \to \R_+$ as the solution of
 \begin{equation}\label{eq:tau}
     \ddot \tau = \frac{\alpha}{2\tau^{1+\alpha}},\quad \tau(0)=1,\ \dot
  \tau(0)=0.
\end{equation}
Its large time behavior turns out to be independent of $\alpha>0$:
\begin{lemma}\label{lem:tau}
  Let $\alpha>0$. 
  The ordinary differential equation \eqref{eq:tau}
 has a unique, global, smooth solution $\tau\in C^\infty(\R;\R_+)$. In
addition, its large time behavior is given by
\begin{equation*}
  \dot \tau(t)\Tend t {\infty} 1,\quad\text{hence }\tau(t)\Eq t
  {\infty} t.
\end{equation*}
\end{lemma}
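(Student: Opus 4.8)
The plan is to treat \eqref{eq:tau} as a conservative second-order ODE and extract a first integral. First I would note that multiplying \eqref{eq:tau} by $\dot\tau$ and integrating in time yields, as long as the solution exists and stays positive,
\[
\frac12 \dot\tau(t)^2 = \frac12 - \frac{1}{2\tau(t)^\alpha} + \frac12,
\]
wait — more carefully, since $\frac{\d}{\dt}\big(\tau^{-\alpha}\big) = -\alpha \tau^{-1-\alpha}\dot\tau$, we get $\frac{\d}{\dt}\big(\tfrac12\dot\tau^2\big) = \frac\alpha{2\tau^{1+\alpha}}\dot\tau = -\tfrac12\frac{\d}{\dt}\big(\tau^{-\alpha}\big)$, hence the conserved energy
\[
\dot\tau(t)^2 + \frac{1}{\tau(t)^\alpha} = 1,\qquad \forall t,
\]
using the initial data $\tau(0)=1$, $\dot\tau(0)=0$. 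This identity immediately gives $\dot\tau^2 \le 1$, so $\dot\tau$ is bounded; and since $\ddot\tau = \alpha/(2\tau^{1+\alpha}) > 0$ wherever defined, $\dot\tau$ is nondecreasing, hence $\dot\tau(t)\ge \dot\tau(0)=0$ and $\tau$ is nondecreasing with $\tau(t)\ge 1$ for $t\ge 0$. In particular the right-hand side of \eqref{eq:tau} is smooth and bounded on $[1,\infty)$ in the variable $\tau$, so by Cauchy–Lipschitz (local existence) plus the a priori bounds $1\le\tau(t)$ and $0\le\dot\tau(t)\le 1$ there is no finite-time blow-up: the solution is global on $\R_+$, and smoothness is immediate by bootstrapping \eqref{eq:tau}. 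For negative times one runs the same argument with $t\mapsto -t$ (the equation is reversible), giving a global solution on all of $\R$, smooth.

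Next, for the asymptotics: from $\tau$ nondecreasing and $\ddot\tau>0$ one has either $\tau(t)\to\ell\in(1,\infty]$. If $\ell<\infty$ were finite, then $\ddot\tau(t)\to \alpha/(2\ell^{1+\alpha})>0$, forcing $\dot\tau\to\infty$, contradicting $\dot\tau\le 1$; hence $\tau(t)\to\infty$ as $t\to\infty$. Then $\tau(t)^{-\alpha}\to 0$, and the energy identity $\dot\tau^2 = 1 - \tau^{-\alpha}$ gives $\dot\tau(t)\to 1$ as $t\to\infty$. Finally $\tau(t)\sim t$ follows from $\dot\tau\to 1$ by l'Hôpital (or Cesàro): $\tau(t)/t \to \dot\tau(t)\to 1$.

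I do not expect any genuine obstacle here; the only mild subtlety is organizing the a priori bounds ($\tau\ge 1$, $0\le\dot\tau\le 1$) before invoking global existence, so that one knows the vector field $\tau\mapsto \alpha/(2\tau^{1+\alpha})$ is being evaluated only on the region $[1,\infty)$ where it is globally Lipschitz and bounded, ruling out blow-up. One could alternatively avoid the ODE-theory packaging entirely: separate variables in $\dot\tau = \sqrt{1 - \tau^{-\alpha}}$ to get $t = \int_1^{\tau(t)} \frac{\d s}{\sqrt{1-s^{-\alpha}}}$, observe the integrand is continuous on $[1,\infty)$ and $\to 1$ at infinity so the integral defines an increasing bijection $[1,\infty)\to[0,\infty)$, and read off both globality and $\tau(t)\sim t$ directly. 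Either route is short; I would present the energy-identity version since it also records the bound $\dot\tau\le 1$ explicitly, which is used elsewhere.
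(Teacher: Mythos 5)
Your proof is correct and follows essentially the same route as the paper: the first integral $\dot\tau^2+\tau^{-\alpha}=1$ obtained by multiplying by $\dot\tau$, the resulting bounds $\tau\ge 1$ and $0\le\dot\tau\le 1$ ruling out blow-up, the convexity/boundedness contradiction forcing $\tau\to\infty$, and then $\dot\tau\to 1$ with $\tau(t)\sim t$ by integration. No issues.
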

The proof  is postponed to Appendix~\ref{sec:tau}. Define  the new unknowns $(R,U)$ by
\begin{equation}
\label{eq:uvFluid}
\ro(t,x) = \frac{1}{\tau(t)^d}R\(t,\frac{x}{\tau(t)}\),\quad 
u(t,x) = \frac{1}{\tau(t)} U \(t,\frac{x}{\tau(t)}\) +\frac{\dot \tau(t)}{\tau(t)}x.
\end{equation}
This change of unknowns does not affect the initial data:
\begin{equation*}
  R(0,x) = \ro(0,x),\quad U(0,x)= u(0,x).
\end{equation*}
In terms of $(R,U) = (R(t,y), U(t,y))$, system \eqref{fluide1}--\eqref{fluide2} then becomes, 
\begin{align}
& \partial_t R+\frac{1}{\tau^2}\Div (R U )=0    \label{RU1}  \\
& \partial_t (R U) +\frac{1}{\tau^2}\Div ( R U \otimes U) 
+\frac{\alpha}{2 \tau^\alpha} y R  + \frac{1}{\tau^{d(\gamma-1)}}\nabla R^\gamma    \label{RU2} \\ 
& \phantom{\partial_t (R U) +\frac{1}{\tau^2}\Div ( R U \otimes U) 
+}=\frac{1}{\tau^2} \Div \left( \frac{\eps^2}{2} \mathbb K[R] 
+ \nu \sqrt{R} \mathbb S[R,U] \right) 
 + \frac{\nu \dot \tau}{\tau} \nabla R, \notag  
\end{align} 
with $\mathbb K$ and $\mathbb S$ defined as previously. 
Since $\tau$ depends on some parameter $\alpha$, so do
the new unknowns $R$ and $U$, and the system \eqref{RU1}-\eqref{RU2}. However, since the large-time behavior of $\tau$ does not depend on $\alpha>0$ (see
Lemma~\ref{lem:tau}), the different changes of unknowns describe the same quantities asymptotically. The precise value of $\alpha$ is motivated by
the a priori estimates we obtain for $(R,U)$. Yet, the analysis could
be performed for various $\alpha$'s, and since the large-time behavior
of $(R,U)$ should be independent of $\alpha,$ one expects that
$\alpha$-dependent terms in \eqref{RU1}-\eqref{RU2} will be  
subdominant in large times. 

\smallbreak

The analogues of \eqref{eq_consmass}, \eqref{energy} and
\eqref{dissipation} in terms of the new unknown $(R,U)$ are the
following. First, for fixed $t$, the map $\ro(t,\cdot)\mapsto
R(t,\cdot)$ preserves the $L^1$-norm, hence \eqref{eq_consmass}
becomes
\begin{equation}\label{massR}
  \int_{\R^d}R(t,y)\dy = \int_{\R^d}\ro_0(x)\dx.
\end{equation}
Define the \emph{pseudo-energy} $\EE[R,U]$ by
\begin{equation}\label{pseudo-energy}
  \begin{aligned}
    \EE[R,U] &= \frac{1}{2 \tau^2} \int_{\R^d} \left( R|U|^2 + \eps^2 |\nabla \sqrt R|^2 \right) \dy 
    + \frac{\alpha}{4\tau^\alpha} \int_{\R^d} |y|^2 R \, \dy \\
    &\quad
+\frac{1}{(\gamma-1) \tau^{d(\gamma-1)}} \int_{\R^d} R^\gamma  \, \dy,
 \end{aligned}
\end{equation}
as well as its nonnegative dissipation $\DD$ by
\begin{equation}\label{dissipationRU}
  \begin{aligned}
    \DD[R,U] &= \frac{\dot \tau}{ \tau} \Bigg\{  \frac{1}{\tau^2} \int_{\R^d} \left( R|U|^2 + \eps^2 |\nabla \sqrt R|^2 \right)  \dy + \frac{\alpha^2}{4 \tau^\alpha}\int_{\R^d} |y|^2 R \, \dy
    + \frac{d}{\tau^{d(\gamma-1)}} \int_{\R^d} R^\gamma \, \dy \Bigg\} \\
&\quad + \frac{\nu}{\tau^4} \int_{\R^d} R | \D U|^2.
  \end{aligned}
\end{equation}
We obtain, at least formally, the following pseudo-energy identity
\begin{equation}\label{dEEdt}
\frac{\d}{\dt} \EE[R,U] + \DD[R,U]  
= - \nu \frac{\dot \tau}{ \tau^3} \int_{\R^d} R \Div U \, \dy .
\end{equation}
We remark that $\EE$ does not correspond to the energy $E$ written in
the $(R,U)$ variables. These formal identities imply that densities
$(R(t,\cdot))_{t>0}$ are positive with finite mass and second
moment. An appropriate functional space to tackle the large-time
behavior is then the set of positive measures on $\mathbb R^d$.  Up to
a scaling argument -- which may only change the amplitude  
of pressure law and Korteweg terms --  we restrict to the case where
$(R(t,\cdot))_{t>0}$ is a family of probability measures.

\medskip

%We proceed with the statements of our main results. 

\begin{notations}
  We use classical notations ${C}^{\infty}_c(\mathbb R^d),$ $\mathcal S(\mathbb R^d)$ for smooth functions with compact support and Schwartz space. Notations $L^{p}(\mathbb R^d)$ (resp. $H^{s}(\mathbb R^d), W^{m,p}(\mathbb R^d)$) refer to Lebesgue (resp. Sobolev spaces). We shall make repeated use of Bochner spaces 
$L^p(0,\infty;L^{q}(\mathbb R^d))$, of  $L^{p}(0,\infty;H^s(\mathbb R^d))$, and their local-in-time variants.
In the space $L^{\infty}_{\rm loc}(0,\infty;L^p (\mathbb R^d))$ we denote $C([0,\infty);L^{p}(\mathbb R^d)-w)$ 
the subspace of continuous functions when endowing $L^{p}(\mathbb R^d)$ with its weak topology.
The space $\mathcal D'(\Omega)$ is made of distributions on the open set $\Omega$ (not to be confused with $\mathcal D$).  We denote by 
$ \mathbb P(\mathbb R^d)$ the set of probability measures on $\R^d$. More generally, for $j\in \N$,
$\mathbb P_j(\mathbb R^d)$ denotes the space of probability measures on
$\R^d$ with finite $j$-th moment. 
\end{notations}

\subsubsection{Rigidity result}
Our main contribution consists in analyzing large-time properties of 
potential weak solutions to \eqref{RU1}-\eqref{RU2}. 
Building up on our previous
construction in \cite{CCH2}  we consider  weak solutions
which read $(\sqrt{R},\sqrt{R}U)$ and that enjoy  the following properties:
\begin{itemize}
\item[(H1)] $\sqrt{R} \in L^{\infty}(0,\infty;L^2(\mathbb R^d)) \cap
  L^{\infty}_{\rm loc}(0,\infty;L^{2\gamma}(\mathbb R^d))$, $ (\eps+\nu) 
  \sqrt{R} \in L^{\infty}_{\rm loc}(0,\infty;H^1(\mathbb R^d)),$ 
  with $R(t,\cdot) \in \mathbb P_2(\mathbb R^d)$ for a.e.\ $t >0,$
\item[(H2)] $\sqrt{R} U \in L^{\infty}_{\rm loc}(0,\infty;L^{2}(\mathbb R^d)),$
\item[(H3)] There exists  $\mathbb{T} \in L^2_{\rm loc}(0,\infty;L^2(\mathbb R^d)),$ such that
\begin{equation*}
\left\{
\begin{aligned}
& \partial_t R + \dfrac{1}{\tau^2}{\rm div}(\sqrt{R} \sqrt{R} U) = 0 \\
& \partial_t(\sqrt{R} \sqrt{R}U) + \dfrac{1}{\tau^2}{\rm div}(\sqrt{R}U \otimes \sqrt{R}U) + \dfrac{\alpha}{2\tau^{\alpha}} yR +  \dfrac{1}{\tau^{d(\gamma - 1)}} \nabla R^{\gamma} 
\\
& \phantom{\partial_t(\sqrt{R} \sqrt{R}U) + \dfrac{1}{\tau^2}{\rm div}(\sqrt{R}U \otimes \sqrt{R}U) }= \dfrac{1}{\tau^2}{\rm div} \left( \frac{\eps^2}{2} \mathbb K + \nu\sqrt{R}\mathbb T^s  \right) + \dfrac{\nu \dot{\tau}}{\tau} \nabla R,
\end{aligned}
\right.
\end{equation*}
holds in $\mathcal D'((0,\infty)\times \mathbb R^d)$
with the compatibility conditions,
when these terms are present:
\[
\mathbb K = \sqrt R \nabla^2 \sqrt R - \nabla \sqrt R \otimes \nabla \sqrt R, \qquad 
\sqrt R \mathbb T =\nabla (\sqrt{R} \sqrt{R}U) - 2 \sqrt{R}U
\otimes \nabla \sqrt{R}.
\]
\end{itemize} 
For legibility, we have written equations in terms of 
$R=\(\sqrt R\)^2$ 
in this definition whereas, since $\sqrt{R}$ is the involved unknown,
these quantities must be computed in terms of $\sqrt{R}.$ Similarly,
$U$ is not an appropriate unknown in our framework. So, we do not
write the quantity $\sqrt{R} \nabla U$ but the symbol $\mathbb T$
which plays its role 
(expressed in terms of $(\sqrt R,\sqrt R U)$),
hence our second compatibility condition in (H3).
The exponent $s$ denotes here the symmetric part of $\mathbb T.$
Such assumptions are also inspired by the definition of weak solution
in \cite[Definition~1.1]{CCH2} (isothermal case), as in \cite[Definition 2.1]{AntonelliHientzschSpirito}), with further momenta requirements for the density (see also \cite{LacroixVasseur} in the case of the torus).
\medskip

To complete the set of assumptions, it is mandatory to enforce in one
way or another the decay properties inherited from \eqref{dEEdt} (as
it is classical for weak solutions to dissipative systems). By abuse of notations, we keep the symbols $\EE$ for energy  and $\mathcal D$ for its dissipation, though they will be computed in terms of $\sqrt{R}, \sqrt{R}U$ and $\mathbb T$ 
(and not $R$ and $U$ which are not the good unknowns in this weak-solution framework). 
Our last requirement builds on the following formal analysis.
First we bound the right-hand side in \eqref{dEEdt} as follows: 
\[
\begin{aligned}
\frac{\d}{\dt} \EE  + \DD
&\le \nu \frac{\dot \tau}{ \tau^3} \left( \int_{\R^d} R \, \dy \right)^{1/2} \left( \int_{\R^d} \left|\mathbb T^s\right|^2 \, \dy\right)^{1/2} \\
&\le 2\nu \frac{(\dot \tau)^2}{\tau^2} \int_{\R^d} R \, \dy + \frac{\nu}{2 \tau^4} \int_{\R^d} |\mathbb T^s|^2 \, \dy,
\end{aligned}
\]
which implies 
\[
\frac{\d}{\dt} \EE  + \frac12 \DD
\le 2\nu \frac{(\dot \tau)^2}{\tau^2} .
\]
Remarking that $\int_0^\infty \frac{(\dot \tau(t))^2}{\tau(t)^2} \d t<
\infty$ (see Lemma~\ref{lem:tau}), we already obtain that 
\begin{equation} \label{eq_dEEt0}
\sup_{t \ge 0} \EE(t) + \int_0^\infty \DD(t) \, \dt \le C(\EE_0),
\end{equation}
where $C(\EE_0)>0$ is a constant depending on the pseudo-energy of the initial data $\EE_0$. This yields at first that $\DD$ is in $L^{1}(0,\infty).$
Observing from \eqref{dissipationRU} that
if $\alpha\le\min(2,(d(\gamma-1))$,
\[
\DD[R,U] \ge \alpha \frac{\dot \tau}{\tau} \EE[R,U],
\]
we therefore deduce the following differential inequality
\begin{equation} \label{eq_dEEt2}
\frac{\d}{\dt} \EE
\le - \frac{\dot \tau}{\tau} \alpha \EE +   2\nu \frac{(\dot \tau)^2}{\tau^2} ,
\end{equation}
which entails after integration that, for all $t \ge 0$ (the outcome is slightly different whether
$\alpha\not =1$ or $\alpha=1$),
\begin{equation}\label{RU:decay}
\EE(t) \le C_0\left( \frac{1}{1+t^\alpha} + \frac{\nu}{1+t} (\mathbf{1}_{\alpha \neq 1}+\ln (1+t) \, \mathbf{1}_{\alpha=1} )\right).
\end{equation}
We note that, when $\nu >0$ and $\alpha > 1,$ these computations only
yield a bound on the growth of the second 
moment of $R(t).$ This can be improved thanks to the
following remark. By multiplying the
continuity
equation with $|y|^2$ we have formally that:
\[
\frac{\d}{\dt} \int_{\R^d} |y|^2 R \, \dy  = \frac{2}{\tau^2} \int_{\R^d} R y \cdot U \, \dy,
\]
which implies 
\[
\begin{aligned}
\frac{\d}{\dt} \int_{\R^d} |y|^2 R \, \dy  
\le  \frac{2}{\tau^2} \left( \int_{\R^d} R |y|^2 \, \dy \right)^{\frac12} \left( \int_{\R^d} R |U|^2 \, \dy \right)^{\frac12} ,
\end{aligned}
\]
and thus
\begin{equation}\label{Ry2}
\begin{aligned}
\frac{\d}{\dt} \left(\int_{\R^d} |y|^2 R \, \dy \right)^{\frac12} 
\le  \frac{1}{\tau} \left( \frac{1}{\tau^2} \int_{\R^d} R |U|^2 \, \dy \right)^{\frac12} \le \frac{C}{\tau} \sqrt{\EE}.
\end{aligned}
\end{equation}
The combined decay of $\EE$ and growth of $\tau$ entail finally that
the second 
moment of $R$ remains bounded whatever the value of $\alpha.$ 

\medskip

Eventually, these formal considerations lead us to the following last assumption:
\begin{itemize}
\item[(H4)] Set $\alpha=\min(2,d(\gamma-1))$.  Introducing $\mathcal E,\mathcal D$ as defined previously (see \eqref{pseudo-energy} and \eqref{dissipationRU}), there exists a constant $C_0 >0$ such that:
\begin{align} \label{eq_diss1}
 &\mathcal E(t) \le C_0 \left( \dfrac{1}{(1+t)^{\alpha}} + \dfrac{\nu}{(1+t)} (\mathbf{1}_{\alpha \neq 1}+\ln (1+t) \, \mathbf{1}_{\alpha=1} )\right) , \qquad \forall \, t >0, \\
 & \sup_{t >0} \left( \int_{\mathbb R^d} |y|^2 R(t,y){\rm d}y \right) +  \int_0^{\infty} \mathcal D(t) \, \dt  \le C_0.
\label{eq_diss2}
\end{align}
\end{itemize}
More details on the derivation of \eqref{eq_diss1}-\eqref{eq_diss2} are given in Section~\ref{sec:existence}.
With these assumptions, our main result yields a description of the large-time behavior of the density $R(t) = [\sqrt{R}(t)]^2.$ This is the content of the following theorem:
\begin{theorem} \label{theo:rigidity}
Assume that $(\sqrt{R},\sqrt{R}U)$ is a global weak solution to \eqref{RU1}--\eqref{RU2} such 
that (H1)--(H4) hold true. There exists $R_{\infty} \in \mathbb P_2(\mathbb R^d)$
such that
\[
R(t,\cdot) \rightharpoonup R_{\infty} \quad \text{ in }  \mathbb P(\mathbb R^d).
\]
We have in addition $R_\infty\in L^1(\R^d)$ (at least) in the
following cases:
\begin{itemize}
\item $\eps = \nu=0$ and $1<\gamma\le 1+2/d$,
\item $\eps>0$, $\nu=0$ and $ \gamma >1 $,
\item $\eps \ge 0$, $\nu>0$ and $1<\gamma\le 1+1/d$.
\end{itemize}
\end{theorem}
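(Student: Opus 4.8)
The plan is to prove the theorem in two stages: first establish the narrow convergence $R(t,\cdot)\rightharpoonup R_\infty$ by combining tightness with an equicontinuity-in-time estimate coming from the continuity equation, and then, in the listed cases, upgrade the integrability of $R_\infty$ by extracting from (H1)--(H4) a uniform-in-time bound on $R(t)$ in a reflexive space that rules out concentration.

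For the first stage, I would start from the uniform second-moment bound \eqref{eq_diss2}, $\sup_{t>0}\int_{\R^d}|y|^2R(t,y)\,\dy\le C_0$, which makes $\{R(t,\cdot)\}_{t>0}\subset\mathbb P(\R^d)$ tight, hence relatively compact for the narrow topology; any narrow limit point then lies in $\mathbb P_2(\R^d)$ by lower semicontinuity of $\mu\mapsto\int|y|^2\,\d\mu$. To single out the limit, fix $\phi\in C^\infty_c(\R^d)$; the continuity equation in (H3) gives the identity
\[
\frac{\d}{\dt}\int_{\R^d}R(t,y)\phi(y)\,\dy=\frac{1}{\tau(t)^2}\int_{\R^d}\sqrt R\,\sqrt R U\cdot\nabla\phi\,\dy ,
\]
valid a.e.\ in $t$ since its right-hand side is locally integrable by (H1)--(H2). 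Cauchy--Schwarz, $\|\sqrt R(t)\|_{L^2}^2=\int_{\R^d}R(t)\,\dy=1$, and $\|\sqrt R U(t)\|_{L^2}^2\le 2\tau(t)^2\EE(t)$ (from \eqref{pseudo-energy}) then yield
\[
\left|\frac{\d}{\dt}\int_{\R^d}R(t,y)\phi(y)\,\dy\right|\le \sqrt2\,\|\nabla\phi\|_{L^\infty}\,\frac{\sqrt{\EE(t)}}{\tau(t)} .
\]
Using $\tau(t)\ge c(1+t)$ (Lemma~\ref{lem:tau}) and \eqref{eq_diss1}, the right-hand side is at most a constant times $(1+t)^{-1-\alpha/2}+\sqrt\nu\,(1+t)^{-3/2}(1+\sqrt{\ln(1+t)})$, which lies in $L^1(0,\infty)$ because $\alpha=\min(2,d(\gamma-1))>0$. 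Hence $t\mapsto\int_{\R^d}R(t)\phi$ has bounded variation near $+\infty$ and converges as $t\to\infty$; combined with tightness and the density of $C^\infty_c(\R^d)$, this gives $R(t,\cdot)\rightharpoonup R_\infty$ in $\mathbb P(\R^d)$ for a unique $R_\infty\in\mathbb P_2(\R^d)$, which is the first assertion.

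For the second stage I would, in each listed case, produce $\sup_{t>0}\|R(t)\|_{X}<\infty$ (or boundedness along some $t_n\to\infty$) with $X$ reflexive and continuously embedded in $L^1_{\mathrm{loc}}(\R^d)$; a subsequence of $R(t)$ then converges weakly in $X$, its limit is identified with the density of $R_\infty$ by testing against $C^\infty_c$, and that density belongs to $L^1(\R^d)$ since $R_\infty\in\mathbb P(\R^d)$. When $\alpha=d(\gamma-1)$ --- which is precisely the case $\eps=\nu=0$, $1<\gamma\le1+2/d$, the case $\eps>0$, $\nu=0$, $1<\gamma\le1+2/d$, and the case $\nu>0$, $1<\gamma<1+1/d$ --- the pressure term in \eqref{pseudo-energy}, \eqref{eq_diss1}, and $\tau(t)\le1+t$ give
\[
\int_{\R^d}R(t,y)^\gamma\,\dy\le(\gamma-1)\,\tau(t)^{d(\gamma-1)}\,\EE(t)\le C ,
\]
the last inequality using, when $\nu>0$, that $\alpha<1$ makes the viscous correction in \eqref{eq_diss1} bounded; so $X=L^\gamma(\R^d)$ with $\gamma>1$ works. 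When $\eps>0$, $\nu=0$ and $\gamma>1+2/d$ we have $\alpha=2$, and the Korteweg term in \eqref{pseudo-energy} together with \eqref{eq_diss1} gives $\int_{\R^d}|\nabla\sqrt R(t)|^2\,\dy\le 2\eps^{-2}\tau(t)^2\EE(t)\le C$; hence $\sqrt R(t)$ is bounded in $H^1(\R^d)$ (recall $\|\sqrt R(t)\|_{L^2}=1$) and, by Sobolev embedding, $R(t)$ is bounded in $L^p(\R^d)$ for some $p>1$, so $X=L^p$ works.

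The main obstacle is the second stage --- ruling out concentration of $R_\infty$: the second-moment bound keeps mass from escaping to infinity but not from concentrating on a Lebesgue-null set, so a genuine $L^p$-bound with $p>1$ is needed, and the scaling exponents in (H4) are tuned so that the pressure or the Korteweg term delivers it exactly in the regimes above. The borderline viscous case $\nu>0$, $\gamma=1+1/d$ (where $\alpha=1$) is the most delicate one: there \eqref{eq_diss1} loses a logarithm and only gives $\int_{\R^d}R(t,y)^\gamma\,\dy\le C\ln(1+t)$, so the simple $L^\gamma$-bound argument fails, and one has to argue more carefully --- using the full integrated dissipation \eqref{eq_diss2}, the compatibility conditions in (H3) for the viscous stress, and/or a Bresch--Desjardins-type estimate for the degenerate viscosity --- to recover enough equi-integrability of $\{R(t)\}$. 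The remaining points are routine: justifying that $t\mapsto R(t,\cdot)$ may be taken weakly continuous (so that the pointwise-in-$t$ values used above are meaningful) and passing from convergence tested against $C^\infty_c$ to narrow convergence of probability measures.
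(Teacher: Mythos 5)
Your argument for the convergence statement is correct but takes a genuinely more direct route than the paper. The paper explicitly considers the ``tempting'' estimate $\tau^{-2}\|RU\|_{L^1}\lesssim \tau^{-1}$ and discards it because it misses integrability by a logarithm; that computation, however, only uses the boundedness of $\EE$, whereas you use its decay \eqref{eq_diss1}, which upgrades $\tau^{-1}$ to $\sqrt{\EE}/\tau\lesssim (1+t)^{-1-\alpha/2}+\sqrt{\nu}\,(1+t)^{-3/2}\bigl(1+\sqrt{\ln(1+t)}\bigr)\in L^1(0,\infty)$ and closes the argument from the continuity equation alone, without ever invoking the momentum equation. The paper instead applies Lemma~\ref{lem_reg} twice: first to the momentum equation to control $RU$ in a weak topology (Proposition~\ref{prop_RU}), then to $\partial_t R$ in $W^{-4,p'}$ (Proposition~\ref{prop:sob}). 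What that buys is a quantitative convergence rate in $W^{-4,p'}(\R^d)$ and the weak time-continuity of $RU$; for the qualitative statement of the theorem your shortcut suffices, and it even yields a rate when testing against Lipschitz functions. Your treatment of the $L^1$ cases coincides with the paper's: the uniform $L^\gamma$ bound extracted from the pressure term in \eqref{pseudo-energy} when $\alpha=d(\gamma-1)$, and the uniform $H^1$ bound on $\sqrt R$ extracted from the Korteweg term when $\eps>0$, $\nu=0$, $\gamma\ge 1+2/d$.

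The one genuine gap is the borderline case $\nu>0$, $\gamma=1+1/d$, which you correctly flag: there $\alpha=1$ and \eqref{eq_diss1} only gives $\int_{\R^d}R^\gamma\lesssim 1+\nu\ln(1+t)$, so the uniform $L^\gamma$ bound fails by a logarithm and the weak-compactness argument does not go through as stated. You should be aware that the paper's own proof has exactly the same issue at this endpoint (its text in effect restricts to $\gamma<1+1/d$ while the theorem is stated with $\le$), so this is not a defect of your proposal relative to the published argument; but if the endpoint is to be included, an additional input is needed, for instance exploiting the integrated dissipation \eqref{eq_diss2} or a BD-entropy estimate, as you suggest. Apart from this, the only points to write out carefully are the routine ones you already list: the absolute continuity of $t\mapsto\int R\phi$ via Lemma~\ref{lem_reg}, and the identification of the weak $L^p$ limit along a subsequence with the narrow limit $R_\infty$.
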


\begin{remark}
In \cite{AnMaZh21}, the authors consider the Korteweg case $\eps>0=\nu$ in
dimension
$d=1$, and show dispersive estimates of another nature, in the sense that they involve a strong topology: under (H1), (H2) and (H3) (the assumptions actually involve $(\ro,u)$ instead of $(R,U)$, which means in particular that one should consider $\tau\equiv 1$ in (H3)), the authors prove that $\ro\in L^{\gamma+1}(\R_t\times \R^d_x)$ and $\partial_x \ro \in L^2(\R_t\times \R^d_x)$. The proof relies on an adaptation of Morawetz estimates, which are a classical tool in the study of nonlinear Schr\"odinger equations; see e.g. \cite{GiVe10} for a presentation which clearly uses the link between the nonlinear Schr\"odinger equation and hydrodynamical equations. 
\end{remark}

We obtain in the course of the proof an explicit polynomial rate of convergence from $R(t,\cdot)$ to $R_{\infty}.$ 
Reconstructing the solution $(\ro,u)$ from $(R,U)$ via the formulas
\eqref{eq:uvFluid}, we infer
\begin{equation*}
  \lim_{t \to \infty} \tau^d(t) \ro(t,\tau(t) \cdot)  = R_{\infty} \text{ in $\mathbb P(\mathbb R^d)$}.
\end{equation*}

\medskip

We emphasize that contrary to the isothermal case $\gamma=1$, where, as proven in \cite{CCH1}, 
the only possible $R_\infty$ is given by
\begin{equation*}
  R_\infty(y) =\frac{\|\ro_0\|_{L^1(\R^d)}}{\pi^{d/2}} e^{-|y|^2},
\end{equation*}
in the polytropic case $\gamma>1$, the range of the map $\rho_0\mapsto
R_\infty$ is very broad. In the case of the Euler equation, we have,
as established in \cite{CCH1} by adapting the approach from
\cite{Serre97}:
\begin{proposition}
  Let  $\eps=\nu=0$, $1<\gamma\le 1+2/d$ and $s>d/2+1$. There exists
  $\eta>0$ such 
  that if $0\le a_\infty\in H^s(\R^d)$ is such
  that 
  $\|a_\infty\|_{H^s(\R^d)}\le \eta$, then
there exists a solution to
\eqref{fluide1}-\eqref{fluide2}  which is global in time,  with
 \begin{equation*}
    \left\| \ro
      (t,x)-\frac{1}{t^d}R_\infty\(\frac{x}{t}\)\right\|_{L^\infty(\R^d)\cap
    L^1(\R^d)}\Tend
    t \infty 0, \quad R_\infty :=a_\infty^{\frac{2}{\gamma-1}}.
  \end{equation*}
\end{proposition}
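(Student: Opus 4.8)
The plan is to construct the solution directly, by a wave-operator argument for the rescaled system in the spirit of \cite{Serre97,CCH1}. First I would reduce \eqref{fluide1}--\eqref{fluide2} with $\eps=\nu=0$ to a symmetrizable quasilinear hyperbolic system in dilated variables: setting $\rho(t,x)=(1+t)^{-d}\varrho(\theta,y)$, $u(t,x)=\tfrac{x}{1+t}+v(\theta,y)$ with $y=\tfrac{x}{1+t}$ and $\theta=\ln(1+t)$ (a change of unknowns of the type \eqref{eq:uvFluid}, matching it asymptotically since $\tau(t)\sim t$ by Lemma~\ref{lem:tau}), and introducing the (renormalized) sound speed $a=\varrho^{(\gamma-1)/2}$, the system becomes, schematically,
\begin{align*}
&\partial_\theta a+v\cdot\nabla a+\tfrac{\gamma-1}{2}\,a\,\Div v=0,\\
&\partial_\theta v+v\cdot\nabla v+v+c\,e^{-d(\gamma-1)\theta}\,a\nabla a=0 .
\end{align*}
The structural features that make this work are: the $+v$ term, a damping produced by the expansion field $\tfrac{x}{1+t}$; and the pressure coupling, carrying the weight $e^{-d(\gamma-1)\theta}\to0$ since $\gamma>1$. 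Moreover the first equation is a linear transport-reaction equation along the characteristics of $v$, so $a$ keeps the sign of its asymptotic data and no vacuum degeneracy appears even when $a_\infty$ vanishes; this is why $a$, not $\varrho$, is the right unknown.

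Next, $(a,v)=(a_\infty,0)$ is an approximate solution for large $\theta$, with residual of size $e^{-d(\gamma-1)\theta}$, so I would construct the exact solution ``from $\theta=+\infty$''. One prescribes $a(\theta,\cdot)\to a_\infty$ in $H^{s}(\R^d)$ (the damping then forces $v(\theta,\cdot)\to0$, so $a_\infty$ is the only free asymptotic datum, the leading behavior of $v$ being explicitly determined by $a_\infty$ through the pressure term) and solves the corresponding integral system on $[\theta_0,+\infty)$ by a contraction mapping in a space of functions with the decay dictated by the damping and by $e^{-d(\gamma-1)\theta}$. Smallness of $\|a_\infty\|_{H^{s}}$, with $s>d/2+1$ making $H^s$ an algebra and controlling the quasilinear terms, makes the map contract and produces a global-in-forward-time smooth solution with a polynomial-in-$t$ rate for $\|a(\theta,\cdot)-a_\infty\|_{H^s}$. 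One then extends it to $\theta\le\theta_0$, i.e.\ to $t\in[0,T_0]$ with $T_0=e^{\theta_0}-1$, by solving the Cauchy problem backward: a finite-time problem for a symmetrizable hyperbolic system, hence solvable on the whole fixed interval provided $\eta$ is small enough, depending on $d,\gamma$ through $T_0$. This yields an initial datum $(\rho_0,u_0)$ at $t=0$ and the desired global smooth solution of \eqref{fluide1}--\eqref{fluide2}.

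Undoing the rescaling, $\rho(t,x)=(1+t)^{-d}\varrho(\theta,x/(1+t))$ with $\varrho=a^{2/(\gamma-1)}\to a_\infty^{2/(\gamma-1)}=:R_\infty$. Convergence of $\rho(t,\cdot)$ to $t^{-d}R_\infty(\cdot/t)$ in $L^\infty(\R^d)$ follows from the $H^s$-convergence of $a$ and Sobolev embedding (and from $(1+t)\sim t$); convergence in $L^1(\R^d)$ follows by combining a uniform bound on the second moment $\int_{\R^d}|y|^2\varrho(\theta,y)\,\dy$ --- obtained from the continuity equation via $\tfrac{\d}{\d\theta}\big(\int_{\R^d}|y|^2\varrho\big)^{1/2}\lesssim\|v(\theta,\cdot)\|_{L^\infty}$ and the integrable decay of $v$ --- with conservation of mass and the pointwise convergence, through tightness and Scheffé's lemma.

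The main obstacle is closing the ``from infinity'' construction in the quasilinear setting with a borderline damping rate ($\sim 1/(1+t)$ in real time): the velocity perturbation decays only polynomially, and the precise rate it inherits depends on the sign of $d(\gamma-1)-1$ --- whether $\gamma$ lies below, at, or above $1+1/d$ inside the admissible range $1<\gamma\le1+2/d$ --- which forces a case distinction in the choice of fixed-point space and in extracting the leading behavior of $v$. A secondary, more technical point, propagating smoothness backward on the fixed interval $[0,T_0]$, is handled by the smallness of the data.
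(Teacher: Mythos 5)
The paper does not actually prove this proposition: it is imported verbatim from \cite{CCH1}, where it is obtained ``by adapting the approach from \cite{Serre97}'', and the present text offers no argument beyond that citation. Your reconstruction is essentially that cited strategy: the self-similar change of variables $\ro=(1+t)^{-d}\varrho(\theta,y)$, $u=y+v$, $\theta=\ln(1+t)$, the passage to the sound-speed-type unknown $a=\varrho^{(\gamma-1)/2}$ (which symmetrizes the system and tolerates vacuum), the observation that the expansion produces the damping $+v$ while the pressure coupling carries the integrable weight $e^{-d(\gamma-1)\theta}$, and a fixed point anchored at $\theta=+\infty$ around the exact stationary state $(a_\infty,0)$ of the limiting system, followed by backward solvability on a fixed finite interval for small data. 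Your computation of the rescaled system is correct, and your identification of the threshold $d(\gamma-1)=1$ for the decay rate of $v$ matches the case distinction that genuinely occurs.

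Two points deserve attention. First, the usual caveat for quasilinear contractions applies: the map defined by the linearized system loses a derivative, so the fixed point must be closed by contracting in a lower norm ($H^{s-1}$ or $L^2$) while propagating $H^s$ bounds; this is standard but should be said. Second, and more substantively, your route to $L^1$ convergence via a uniform second-moment bound and Scheff\'e requires $\int_{\R^d}|y|^2 a_\infty^{2/(\gamma-1)}\,\dy<\infty$ at some initial $\theta_0$, which does not follow from $a_\infty\in H^s(\R^d)$ alone (e.g.\ $a_\infty^{2/(\gamma-1)}$ can fail to have a finite second moment even when it is integrable). This can be repaired without moments: testing the transport equation $\partial_\theta\varrho+\Div(\varrho v)=0$ against a cutoff $\chi(y/R)$ gives $\bigl|\frac{\d}{\d\theta}\int\chi(y/R)\varrho\bigr|\le CR^{-1}M\|v(\theta)\|_{L^\infty}$ with $\int_{\theta_0}^\infty\|v\|_{L^\infty}\,\d\theta<\infty$, which yields $\int R_\infty=M$ and uniform-in-$\theta$ tightness, whence $L^1$ convergence from the locally uniform convergence of $\varrho(\theta)$ to $R_\infty$. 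With that substitution your argument is a faithful rendering of the Serre--\cite{CCH1} proof.
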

In the case of the Euler-Korteweg system, we will prove (for a different
range of $\gamma$): 
\begin{proposition}\label{prop:wave-op-korteweg}
  Let $\eps>0=\nu$,
  \begin{equation*}
    \gamma>3 \text{ if }d=1,\quad
    1+\frac{4}{d+2}<\gamma<1+\frac{4}{(d-2)_+} \text{ if }d\ge 2.
  \end{equation*}
  For any $a_\infty\in \mathcal S(\R^d)$, there exists a solution to
  \eqref{fluide1}-\eqref{fluide2} such that
  \begin{equation*}
    \left\| \ro(t,x)-\frac{1}{t^d}R_\infty\(\frac{x}{t}\)\right\|_{L^1(\R^d)}\Tend
    t \infty 0, \quad R_\infty :=|a_\infty|^2.
  \end{equation*}
\end{proposition}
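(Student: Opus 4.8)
The plan is to exploit that, when $\nu=0<\eps$, system \eqref{fluide1}--\eqref{fluide2} is the hydrodynamic (Madelung) form of a semiclassical defocusing nonlinear Schr\"odinger equation, and that the prescribed profile $t^{-d}R_\infty(\cdot/t)$ with $R_\infty=|a_\infty|^2$ is precisely the large-time dispersive profile of the \emph{free} Schr\"odinger flow. So I would produce a solution $\psi$ of that NLS whose asymptotics as $t\to\infty$ is that of the free evolution of a suitable final datum $\psi_+$, and then transfer this information to $(\ro,u)$.

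First, recall the Madelung correspondence: writing $h(\ro)=\tfrac{\gamma}{\gamma-1}\ro^{\gamma-1}$ for the enthalpy associated with $P(\ro)=\ro^\gamma$, a direct computation shows that if $\psi$ solves
\[
i\eps\,\partial_t\psi+\frac{\eps^2}{2}\Delta\psi=h(|\psi|^2)\,\psi=\frac{\gamma}{\gamma-1}\,|\psi|^{2(\gamma-1)}\psi ,
\]
then $\ro:=|\psi|^2$ and $J:=\eps\,\IM(\bar\psi\nabla\psi)$ (playing the role of $\ro u$) solve \eqref{fluide1}--\eqref{fluide2} with $\nu=0$. Since the solution I will build vanishes at isolated points, this correspondence has to be read in the weak, quantum-hydrodynamic sense, working with $\sqrt\ro=|\psi|$ and $\Lambda=J/\sqrt\ro$ defined through the polar factorisation of $\psi$, in the spirit of (H1)--(H3) with $\tau\equiv1$ (see \cite{AntonelliHientzschSpirito,CCH2}). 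Next, given $a_\infty\in\mathcal S(\R^d)$, I would pick $\psi_+\in\mathcal S(\R^d)$ with $\widehat{\psi_+}(\xi)=c_{d,\eps}\,a_\infty(\eps\xi)$ for the suitable normalising constant $c_{d,\eps}>0$; the classical large-time expansion
\[
e^{i\frac{\eps t}{2}\Delta}\psi_+(x)=\frac{1}{(i\eps t)^{d/2}}\,e^{i|x|^2/(2\eps t)}\,\widehat{\psi_+}\!\left(\frac{x}{\eps t}\right)+o_{L^2(\R^d)}(1),\qquad t\to\infty,
\]
then gives $\big\|\,|e^{i\frac{\eps t}{2}\Delta}\psi_+|^2-t^{-d}R_\infty(\cdot/t)\big\|_{L^1(\R^d)}\Tend t \infty 0$, with $R_\infty=|a_\infty|^2$.

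The core of the argument is the construction of the wave operator: a solution $\psi$ of the NLS above with $\|\psi(t)-e^{i\frac{\eps t}{2}\Delta}\psi_+\|_{L^2(\R^d)}\to0$ as $t\to\infty$ (in fact in $\Sigma:=\{f\in H^1(\R^d):\,xf\in L^2(\R^d)\}$). I would do this via the pseudo-conformal/lens transformation, which turns the asymptotic condition at $t=\infty$ into a Cauchy problem, posed at a finite time, for an NLS whose nonlinearity carries a coefficient vanishing like $t^{\,d(\gamma-1)-2}$: the condition $d(\gamma-1)>2$ (short-range regime) makes this coefficient time-integrable, so the Duhamel map is a contraction on a short interval; the upper restriction $\gamma<1+4/(d-2)$ is what keeps the transformed problem tractable in the $\Sigma$-framework, while for $d\in\{1,2\}$ only the short-range restriction remains. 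This is exactly the range in the statement, and the construction is the $\eps$-dependent analogue of the classical ones recalled e.g. in \cite{CCH1,GiVe10} and of the nonlinear Schr\"odinger estimates of the present paper. Extending backward from a large time, $\psi$ is defined on all of $\R_+$, using the defocusing (coercive) energy--mass structure -- and, for the possibly $H^1$-supercritical part of the range, working directly on the lens-transformed equation, which lives on a finite interval.

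With $\psi$ at hand, I would set $\ro:=|\psi|^2$, $J:=\eps\IM(\bar\psi\nabla\psi)$; this is a solution of \eqref{fluide1}--\eqref{fluide2} by the first step. Mass conservation gives $\|\psi(t)\|_{L^2}=\|\psi_+\|_{L^2}$, hence
\[
\left\|\ro(t)-\frac{1}{t^d}R_\infty\!\left(\frac{\cdot}{t}\right)\right\|_{L^1}\le\left\|\psi(t)-e^{i\frac{\eps t}{2}\Delta}\psi_+\right\|_{L^2}\!\left(\|\psi(t)\|_{L^2}+\|\psi_+\|_{L^2}\right)+\left\|\,\left|e^{i\frac{\eps t}{2}\Delta}\psi_+\right|^2-\frac{1}{t^d}R_\infty\!\left(\frac{\cdot}{t}\right)\right\|_{L^1},
\]
and both terms vanish as $t\to\infty$, by the wave-operator construction and by the free asymptotics respectively, which is the claim. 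The hard parts will be, first, making the Madelung dictionary rigorous \emph{with vacuum} -- the solution genuinely vanishes, so $u=\nabla\phi$ is unavailable and one must work with $(\sqrt\ro,\Lambda)$ and verify the compatibility relations of (H3); and, second, the wave-operator construction itself in the stated range, where the NLS can be $H^1$-supercritical so that no off-the-shelf global well-posedness applies: one must rely on the lens transform and on the vanishing of the transformed nonlinearity near the relevant endpoint, while carrying the semiclassical parameter $\eps$ through all the Strichartz-type estimates.
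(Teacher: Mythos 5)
Your strategy is exactly the paper's: transfer the problem to the defocusing NLS via the Madelung transform, invoke the existence of wave operators in $\Sigma$ for the stated range of $\gamma$ (equivalently of $\sigma=(\gamma-1)/2$), combine with the standard $L^2$-asymptotics of the free Schr\"odinger flow, and conclude by the triangle and Cauchy--Schwarz inequalities; the paper simply cites standard scattering theory (Ginibre--Velo, Cazenave) for the wave-operator step rather than re-deriving it via the lens transform. Two small corrections to your sketch: the upper bound $\gamma<1+4/(d-2)_+$ is precisely energy-subcriticality, so no part of the range is $H^1$-supercritical and global well-posedness is classical; and the lower threshold $\gamma>1+4/(d+2)$ comes from the Strichartz bookkeeping in the wave-operator construction, not from mere time-integrability of the lens-transformed coefficient $(1-s)^{d\sigma-2}$ (which would only require $\sigma>1/d$, the wrong threshold for $d\ge 3$).
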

This result is a direct consequence of scattering theory for nonlinear
Schr\"odinger equations, as discussed more precisely in
Section~\ref{sec:wave-op-korteweg}.

\subsubsection{Existence results}
The second natural contribution consists in making sure that the assumptions of
Theorem~\ref{theo:rigidity} are not empty. 

\begin{theorem}\label{theo:existence}
In the three following cases, initial data $(\rho_0,u_0)$ yield at least one global weak solution $(\sqrt{R},\sqrt{R}U)$ to \eqref{RU1}-\eqref{RU2} satisfying the assumptions of Theorem~\ref{theo:rigidity}:\\[-6pt]
\paragraph{(i) {\bf Euler equations}.} Assume $\eps=\nu=0$. Let $\gamma>1$, $s > d/2 +1$
and $r_0\in H^s(\R^d)$  such that $r_0\ge 0$ is compactly supported
with $\left\|r_0\right\|_{H^s(\R^d)}$ sufficiently small. Then, assume
$\ro_0(x) =r_0(x)^{\frac{2}{\gamma-1}}$, and  $u_0$  satisfies
$\nabla^{2}u_0\in H^{s-1}(\R^d)$, $\nabla u_0\in
  L^\infty(\R^d)$, and there exists $\delta>0$ such that for all
  $x\in \R^d$, ${\rm dist}(\operatorname{Sp}(\nabla u_0(x)),\R_-)\ge \delta$, 
where we denote by $\operatorname{Sp}(M)$ the
spectrum of a matrix $M$. \\[-6pt]
\paragraph{(ii) {\bf Euler-Korteweg equations}}
Assume $\eps >0$, $\nu = 0$ and $1<\gamma<1+\frac{4}{(d-2)_+}$, and there exists
   \begin{equation*}
     \psi_0\in \Sigma:= \{f\in H^1(\R^d),\quad x\mapsto xf(x)\in
     L^2(\R^d)\},
   \end{equation*}
   such that $\ro_0 = |\psi_0|^2$, $\ro_0u_0 = \eps \IM(\bar
   \psi_0\nabla \psi_0)$.\\[-6pt]
\paragraph{(iii) {\bf Navier-Stokes equations}.}  
Assume $d\le 3,$ $\gamma >1,$ $\nu >0$ and $\eps \geq 0.$ Let $(\ro_0,u_0)$ satisfy:
  \begin{equation*}
(1+|x|+|u_0|)\sqrt{\ro_0} \in L^2(\mathbb R^d), \quad \ro_0 \in L^{\gamma}(\mathbb R^d), \quad \sqrt{\ro_0} \in H^1(\mathbb R^d) .
\end{equation*}
\end{theorem}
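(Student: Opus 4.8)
The plan is to construct, in each of the three cases, a global weak solution $(\sqrt R,\sqrt R U)$ satisfying (H1)--(H4) by working not with the rescaled system \eqref{RU1}--\eqref{RU2} directly, but with the original system \eqref{fluide1}--\eqref{fluide2}, producing a solution $(\ro,u)$ there, and then transporting it through the explicit change of unknowns \eqref{eq:uvFluid}. Since the map $(\ro,u)\mapsto(R,U)$ is a smooth time-dependent diffeomorphism (by Lemma~\ref{lem:tau}, $\tau$ is smooth and strictly positive), the regularity and integrability in (H1)--(H3) for $(\sqrt R,\sqrt R U)$ follow from the corresponding bounds for $(\sqrt\ro,\sqrt\ro\,u)$ together with the algebraic relations between the two systems. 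The decay assumption (H4) is of a different nature: it does not follow from mere existence but must be extracted from the dissipative structure of each model, and this is where the three cases genuinely diverge. The formal derivation of \eqref{eq_diss1}--\eqref{eq_diss2} presented before assumption (H4) is exactly the template; the task in each case is to make that formal computation rigorous on the constructed solution.

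\textbf{Case-by-case construction of $(\ro,u)$.} For case (i) (Euler, $\eps=\nu=0$), I would appeal to the local-in-time well-posedness theory for the compressible Euler system in $H^s$ with $s>d/2+1$, written in the symmetrized unknown $r=\ro^{(\gamma-1)/2}$ (proportional to the sound speed), for which the structure is that of a symmetric hyperbolic system. The hypotheses $r_0\in H^s$ small, $\nabla u_0\in L^\infty$ with spectrum of $\nabla u_0(x)$ uniformly bounded away from $\R_-$, are precisely the conditions ensuring that the flow generated by $u_0$ expands rather than focuses, so that no shock forms and the solution is global; the global existence then follows by a continuation argument in the rescaled variables, where smallness of $R^\gamma$ is propagated by the decay \eqref{rhou:decay}. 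For case (ii) (Euler-Korteweg, $\eps>0$, $\nu=0$), I would use the Madelung transform: given $\psi_0\in\Sigma$ with $\ro_0=|\psi_0|^2$ and $\ro_0u_0=\eps\IM(\bar\psi_0\nabla\psi_0)$, the solution of the semiclassical nonlinear Schr\"odinger equation $i\eps\partial_t\psi+\tfrac{\eps^2}{2}\Delta\psi=|\psi|^{2(\gamma-1)}\psi$ with data $\psi_0$ exists globally in $\Sigma$ in the relevant (energy-subcritical, mass-subcritical or defocusing) range $1<\gamma<1+4/(d-2)_+$, and $(\ro,u)=(|\psi|^2,\eps\IM(\bar\psi\nabla\psi)/|\psi|^2)$ yields a weak solution to Euler-Korteweg in the sense of (H1)--(H3). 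For case (iii) (Navier-Stokes, $\nu>0$, $d\le3$), I would invoke the global existence theory for compressible Navier-Stokes with degenerate viscosity of Vasseur-Yu / Antonelli-Hientzsch-Spirito type, adapted as in \cite{CCH2}, which furnishes a global weak solution with the BD-entropy regularity $\sqrt\ro\in L^\infty_{\rm loc}(H^1)$ and the stated initial integrability.

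\textbf{Transporting to the rescaled system and verifying (H1)--(H3).} Having $(\sqrt\ro,\sqrt\ro\,u)$ in hand, I would define $(\sqrt R,\sqrt R U)$ by \eqref{eq:uvFluid} and check that the weak formulation in (H3) holds in $\mathcal D'((0,\infty)\times\R^d)$. This is a direct but careful computation: one substitutes \eqref{eq:uvFluid} into the weak form of \eqref{fluide1}--\eqref{fluide2}, uses the defining ODE \eqref{eq:tau} for $\tau$ to generate the drift term $\tfrac{\alpha}{2\tau^\alpha}yR$ and the damping term $\tfrac{\nu\dot\tau}{\tau}\nabla R$, and identifies $\mathbb T$ with the rescaled version of $\sqrt\ro\,\nabla u$ so that the compatibility conditions transform covariantly. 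The integrability statements in (H1)--(H2) follow from conservation of mass \eqref{eq_consmass}, from the energy bound \eqref{dEdt}, and from the BD/Korteweg regularity; the second-moment control $R(t,\cdot)\in\mathbb P_2$ follows from \eqref{rho:moment2} after rescaling, since $\int|y|^2R\,\dy=\tau^{-2}\int|x|^2\ro\,\dx\le C(E_0)(1+t^2)/\tau^2$, which is bounded by Lemma~\ref{lem:tau}.

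\textbf{The main obstacle: verifying (H4).} The crux is establishing the quantitative decay \eqref{eq_diss1} and the uniform bounds \eqref{eq_diss2}. The formal computation leading from the pseudo-energy identity \eqref{dEEdt} to \eqref{RU:decay} is laid out in the excerpt, but it must be justified on weak solutions, where neither $\EE$ need be absolutely continuous in $t$ nor the integrations by parts be licit. The rigorous route is to prove that the solutions constructed above satisfy the pseudo-energy inequality in integral form, namely $\EE(t)+\tfrac12\int_s^t\DD\le\EE(s)+2\nu\int_s^t\tfrac{(\dot\tau)^2}{\tau^2}$ for a.e.\ $s$ and all $t\ge s$, which is the weak-solution analogue of \eqref{dEEdt}. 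For case (i) this follows from the smoothness of the solution; for case (ii) it is a consequence of the conservation laws of the Schr\"odinger flow expressed through the Madelung variables (the pseudo-energy corresponds to a known evolving functional along the NLS flow, essentially a pseudo-conformal-type quantity); for case (iii) it is obtained by propagating the energy inequality through the approximation scheme and passing to the limit with lower semicontinuity, which is the delicate point because the dissipation term $\tfrac{\nu}{\tau^4}\int R|\D U|^2$ and the commutator-type remainder in \eqref{dEEdt} must survive the weak limit. Once the integral pseudo-energy inequality is available, \eqref{eq_diss1} follows from the differential inequality \eqref{eq_dEEt2} by Gr\"onwall, using $\alpha=\min(2,d(\gamma-1))$ and $\int_0^\infty\tfrac{(\dot\tau)^2}{\tau^2}<\infty$ from Lemma~\ref{lem:tau}; the uniform second-moment bound in \eqref{eq_diss2}, needed precisely when $\nu>0$ and $\alpha>1$ where \eqref{eq_diss1} alone is insufficient, comes from integrating \eqref{Ry2} and using the now-established decay of $\sqrt\EE$ against the growth $\tau\sim t$. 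I expect the passage to the limit in the Navier-Stokes dissipation to be the hardest single step, since it requires strong enough compactness (via the BD entropy and the renormalized formulation) to identify the quadratic term $R|\D U|^2$ in the limit rather than merely bounding it.
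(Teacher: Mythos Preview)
Your proposal matches the paper's approach: Grassin's global-existence theorem for expanding Euler flows in case (i), the Madelung transform plus defocusing NLS theory in $\Sigma$ for case (ii), and the Vasseur--Yu/Lacroix-Vasseur approximation scheme (as adapted in \cite{CCH2}) for case (iii). The one procedural difference worth noting is that in case (iii) the paper runs the regularized approximation directly on the \emph{rescaled} system \eqref{RU1}--\eqref{RU2} (on a large torus) and derives the decay bounds (H4) for a regularized pseudo-energy $\EE_{\mathrm{reg}}\ge\EE$ and dissipation $\DD_{\mathrm{reg}}\ge\DD$ at that approximate level, where all manipulations are justified by smoothness; passing to the limit then requires only that these \emph{upper bounds} be stable under weak convergence. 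In particular, you do not need to identify the quadratic term $R|\D U|^2$ in the limit---bounding it via lower semicontinuity suffices---so the step you flag as the hardest is in fact simpler than you suggest.
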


We remind that the change of unknown from small-letter to capital-letter unknowns does not affect initial data. In particular, depending on the case, we may prefer to solve the small-letter system \eqref{fluide1}-\eqref{fluide2} and then apply the change of unknown to yield weak solutions satisfying (H1)-(H4) or directly work on the scaled system \eqref{RU1}-\eqref{RU2} with the capital-letter unknowns. More details are given in Section \ref{sec:existence}. 

\subsection{Nonlinear Schr\"odinger equation}
\label{sec:NLS0} 
It is well-known (see e.g. \cite{AnMa09,CaDaSa12}) that the
Euler-Korteweg equation is intimately related to 
the nonlinear Schr\"odinger equation (NLS)
\begin{equation}
  \label{eq:nls}
  i\eps\partial_t \psi^\eps +\frac{\eps^2}{2}\Delta \psi^\eps = \lambda
  |\psi^\eps|^{2\si}\psi^\eps, \quad \psi^\eps_{\mid t=0}=
  \psi_0^\eps\in H^1(\R^d),
\end{equation}
through the Madelung transform,
\begin{equation}
  \label{eq:madelung}
  \ro = |\psi^\eps|^2,\quad \ro  u =\eps\IM\(\bar \psi^\eps\nabla
  \psi^\eps\),\quad \text{with}\quad
  \lambda = \frac{\gamma}{\gamma-1},\quad \si =\frac{\gamma-1}{2} .
\end{equation}
We emphasize the dependence of $\psi^\eps$ upon $\eps$ through the
notation, for the limit $\eps\to 0$ corresponds to the semi-classical
limit, and will be discussed in the present paper. The Cauchy problem
\eqref{eq:nls} is easier than its fluid mechanical counterpart: if
$\lambda>0$ and  
$0<\si<\tfrac{2}{(d-2)_+}$  (defocusing, energy-subcritical
  nonlinearity), then  \eqref{eq:nls}  has a unique solution
  \begin{equation*}
    \psi^\eps \in C(\R;H^1(\R^d))\cap L^{\frac{4\si+4}{d\si}}_{\rm
      loc}(\R;L^{2\si+2}(\R^d)). 
  \end{equation*}
 See e.g.~\cite{CazCourant}. If in addition $x\mapsto x\psi_0^\eps\in
 L^2(\R^d)$, then this  integrability property is propagated by the
 flow. The 
analogue 
in the context of nonlinear Schr\"odinger equations
of the evolution of $B[\ro,u]$ 
in the fluid mechanical case
was discovered by Ginibre and Velo~\cite{GV79scatt} 
(and thus actually before its counterpart in fluid mechanics),
and goes under the name of
 \emph{pseudo-conformal conservation law}. 
 \begin{theorem}\label{theo:NLS}
   Let $d\ge 1$, $\eps,\lambda>0$, $0<\si<\frac{2}{(d-2)_+}$, and
   \begin{equation*}
     \psi_0^\eps\in \Sigma:= \{f\in H^1(\R^d),\quad x\mapsto xf(x)\in
     L^2(\R^d)\}. 
   \end{equation*}
   Rescale the function $\psi^\eps$ to $\Psi^\eps$ via
   \begin{equation}\label{eq:psiPsi}
  \psi^\eps(t,x) =
  \frac{1}{\tau(t)^{d/2}}\Psi^\eps\(t,\frac{x}{\tau(t)}\)e^{i\frac{\dot
      \tau(t)}{\tau(t)}\frac{|x|^2}{2\eps}} \|\psi_0^\eps\|_{L^2(\R^d)},
\end{equation}
where $\tau(t)$ is a scaling like before (in particular, $\tau(t)\sim
t$ as $t\to \infty$). There exists $R^\eps_\infty\in \mathbb P_2(\R^d)
$ such that
\begin{equation*}
  |\Psi^\eps(t,\cdot)|^2 \rightharpoonup R^\eps_{\infty} \quad \text{ in }
  \mathbb P(\mathbb R^d). 
\end{equation*}
 \end{theorem}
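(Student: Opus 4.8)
The plan is to reduce the statement to Theorem~\ref{theo:rigidity} by verifying that the Madelung quantities attached to $\Psi^\eps$ form a weak solution of \eqref{RU1}--\eqref{RU2} satisfying (H1)--(H4) with $\lambda=\gamma/(\gamma-1)$, $\si=(\gamma-1)/2$. First, I would recall the pseudo-conformal machinery: the operator $J^\eps(t)=x+i\eps t\nabla$ (or rather its rescaled avatar adapted to the time-scale $\tau$) acting on $\psi^\eps$, and the fact that $\|\psi^\eps(t)\|_{L^2}$ is conserved, which (after the normalization by $\|\psi_0^\eps\|_{L^2}$ built into \eqref{eq:psiPsi}) makes $|\Psi^\eps(t,\cdot)|^2$ a probability density for every $t$. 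The pseudo-conformal conservation law of Ginibre--Velo, written in the $\tau$-rescaled variables exactly as one rewrites $B[\ro,u]$ in the $(R,U)$ variables, gives the decay of the pseudo-energy $\EE$; this is the step that produces \eqref{eq_diss1}. Concretely, set $R^\eps=|\Psi^\eps|^2$ and $R^\eps U^\eps = \eps\,\IM(\bar\Psi^\eps\nabla\Psi^\eps)$ (the transport of the current through the change of variables \eqref{eq:psiPsi} produces precisely the drift term $\dot\tau/\tau\,x$ appearing in \eqref{eq:uvFluid}), and define $\sqrt{R^\eps}\,\mathbb T^s$ from the Hessian structure of $\Psi^\eps$; since $\nu=0$ in the NLS picture, the term $\mathbb S[R,U]$ is absent and only the quantum tensor $\mathbb K[R^\eps]=\eps^{-2}(\text{Madelung})$ survives with coefficient $\eps^2/2$, matching \eqref{RU2}.

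The core computations I would carry out are: (1) show that $(\sqrt{R^\eps},\sqrt{R^\eps}U^\eps)$ solves the system in (H3) in $\mathcal D'$ — this is a direct consequence of \eqref{eq:nls} under the Madelung correspondence, using that $\psi^\eps$ is an $H^1$ strong solution so all products make sense and the quantum pressure identity $\tfrac{\eps^2}{2}\Div\mathbb K[|\psi^\eps|^2] = \tfrac{\eps^2}{2}|\psi^\eps|^2\nabla(\Delta\sqrt{|\psi^\eps|^2}/\sqrt{|\psi^\eps|^2})$ absorbs the Laplacian term; (2) verify (H1)--(H2): the $\Sigma$-regularity of $\psi_0^\eps$ together with persistence of $x\psi^\eps(t)\in L^2$ gives $R^\eps(t)\in\mathbb P_2$, the energy conservation gives $\sqrt{R^\eps}\in L^\infty_t H^1$ after rescaling, and the Strichartz/space-time integrability $\psi^\eps\in L^{(4\si+4)/(d\si)}_{\rm loc}L^{2\si+2}$ gives the local $L^{2\gamma}$ bound on $\sqrt{R^\eps}$ (since $2\gamma=2\si+2$) and the local $L^2$ bound on $\sqrt{R^\eps}U^\eps$ from $\nabla\psi^\eps\in L^\infty_t L^2$; (3) derive \eqref{eq_diss1}--\eqref{eq_diss2}: the pseudo-conformal law gives, in the rescaled frame, exactly the differential inequality $\tfrac{\d}{\dt}\EE^\eps + \tfrac{\dot\tau}{\tau}\alpha\,\EE^\eps\le 0$ (here $\nu=0$ so the right-hand side of \eqref{dEEdt} vanishes), hence \eqref{eq_diss1} with the $\nu$-term dropped, and $\sup_t\int|y|^2R^\eps(t)\,\dy\le C_0$ follows as in \eqref{Ry2} from integrating $\tfrac{\d}{\dt}(\int|y|^2R^\eps)^{1/2}\le C\tau^{-1}\sqrt{\EE^\eps}$.

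With (H1)--(H4) in hand, Theorem~\ref{theo:rigidity} applies verbatim and yields $R^\eps_\infty\in\mathbb P_2(\R^d)$ with $|\Psi^\eps(t,\cdot)|^2=R^\eps(t,\cdot)\rightharpoonup R^\eps_\infty$ in $\mathbb P(\R^d)$, which is the claim. The main obstacle I anticipate is bookkeeping rather than conceptual: one must check that the precise form of the rescaled pseudo-conformal law matches the pseudo-energy $\EE$ of \eqref{pseudo-energy} and its dissipation $\DD$ of \eqref{dissipationRU} — in particular that the choice $\alpha=\min(2,d(\gamma-1))=\min(2,2d\si)$ is compatible with the quantum-pressure contribution and that the phase factor $e^{i(\dot\tau/\tau)|x|^2/(2\eps)}$ in \eqref{eq:psiPsi} is exactly the one that converts the NLS $J^\eps$-operator into the rescaled momentum $\sqrt{R^\eps}U^\eps$ with no leftover terms. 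A secondary technical point is justifying the Madelung formalism at the level of regularity available (vacuum set $\{\psi^\eps=0\}$), but since the weak-solution framework (H3) is phrased entirely in terms of $\sqrt{R^\eps}$ and $\sqrt{R^\eps}U^\eps$ — which are globally defined $H^1$, resp. $L^2$, functions built directly from $\psi^\eps$ — this difficulty does not actually arise, exactly as in \cite{CCH2} and \cite{AntonelliHientzschSpirito}.
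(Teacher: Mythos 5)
Your proposal follows the paper's proof essentially verbatim: rescale via \eqref{eq:psiPsi}, derive the rescaled pseudo-conformal (pseudo-energy) identity for $\Psi^\eps$ to obtain the decay and dissipation bounds of Proposition~\ref{prop:nls}, identify $R^\eps=|\Psi^\eps|^2$ and $R^\eps U^\eps=\eps\IM(\bar\Psi^\eps\nabla\Psi^\eps)$ through the Madelung transform (using $\eps^2|\nabla\Psi^\eps|^2=|\eps\nabla\sqrt{R^\eps}|^2+R^\eps|U^\eps|^2$ to match \eqref{eq:energyPsi} with \eqref{pseudo-energy}) so that (H1)--(H4) hold, and conclude by Theorem~\ref{theo:rigidity} in the Euler--Korteweg case. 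The only quibble is a bookkeeping slip (your ``$2\gamma=2\si+2$'' is inconsistent with the correspondence $\si=(\gamma-1)/2$ you quote), which does not affect the argument since the relevant $L^{2\si+2}$ control comes from the $\Sigma$-regularity and the pseudo-energy in any case.
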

More details are given in
Section~\ref{sec:nls}. At this stage, we emphasize the fact that $\si$
is arbitrarily small. In particular, for $0<\si\le 1/d$, the
nonlinearity is long range, in the sense that no standard scattering
result is possible: fix $\eps>0$, and assume that there exists
$\psi_+^\eps\in L^2(\R^d)$ such that
\begin{equation}\label{eq:scattering}
  \left\|\psi^\eps(t) - e^{i\eps
      \frac{t}{2}\Delta}\psi^\eps_+\right\|_{L^2(\R^d)}\Tend t \infty 0,
\end{equation}
then necessarily $\psi^\eps\equiv 0$, from \cite{Barab}. On the other
hand, it is a common belief that long range effects affect only the
behavior of the phase, at leading order, meaning that the dispersion
is the same as in the linear case. Indeed, for $\si>1/d$, under the
assumptions of Theorem~\ref{theo:NLS}, there exists $\psi^\eps_+$
(with in particular $\|\psi^\eps_+\|_{L^2(\R^d)}=\|
\psi^\eps_0\|_{L^2(\R^d)}$) such that \eqref{eq:scattering} holds (\cite{TsYa84}).
Recall that in $L^2(\R^d)$ (see e.g. \cite{Tsutsumi85}),
\begin{equation*}
    e^{i\eps \frac{t}{2}\Delta}f(x)\Eq t { \infty} \frac{1}{(\eps t)^{d/2}}\hat
   f\(\frac{x}{\eps t}\)e^{i\frac{|x|^2}{2\eps t}}. 
\end{equation*}
 Therefore, for $\eps>0$ fixed, Theorem~\ref{theo:NLS} shows that
 long range effects do not alter the standard dispersion.

\subsection{Outline of the paper}
In brief, the paper splits into 3 sections and 2 appendices. 
In {\bf Section~\ref{sec:universal}} we provide a proof of {\bf Theorem~\ref{theo:rigidity}}. 
The next section is devoted to the analysis of nonlinear Schrödinger equations to provide
the examples of {\bf Proposition~\ref{prop:wave-op-korteweg}}. We complement the analysis
in {\bf Section~\ref{sec:existence}} with the proof of the existence
result {\bf Theorem~\ref{theo:existence}}. 
The two appendices are
devoted  to the formal computation of decay  
estimate \eqref{rhou:decay}, and to the properties of the scaling
parameter families $(\tau(t))_{t>0}$, respectively.

%%%%%%%%%%%%%%%

\section{Proof of Theorem \ref{theo:rigidity}}
\label{sec:universal}

%In this section, we prove Theorem~\ref{theo:rigidity}.
We consider  non-negative parameters $\eps,\nu$, and assume that $(\sqrt{R},\sqrt{R}U)$
is a global weak solution to \eqref{RU1}-\eqref{RU2}
in the sense of (H1)-(H3), enjoying the decay properties  
(H4).

\medskip

As a preliminary, we note from (H4) that
\[
\left( \int_{\mathbb R^d} |y|^2 R(t,y)\dy \right)_{t >0} \text{ is bounded. }
\]
So the family of probability densities $(R(t,\cdot))_{t >0}$ is tight
and precompact in $\mathbb P(\mathbb R^d).$  
Remark that this already implies that there is some sequence of times
$(t_n)_{n \ge 0}$ with $t_n \to \infty$ as $n \to \infty$, such that
$(R(t_n,\cdot))_{n \ge 0}$ converges weakly in $\mathbb P(\mathbb
R^d)$ to some probability measure $R_\infty$. Unlike in the isothermal
case, we have not been able to identify a limiting equation for
$R_\infty$, which could make it possible to infer uniqueness of the
accumulation point ($R(t,\cdot)$ might keep oscillating as $t\to
\infty$). 
However, given the uniform bound on $(R(t,\cdot))$ in $\mathbb P_2(\mathbb R^d),$ our proof reduces to obtaining convergence in some sufficiently large dual space.  To this end, we will make repeated use of the following lemma:
\begin{lemma} \label{lem_reg}
Let $T >0,$ $m \in \mathbb N$ and $(p,q) \in (1,\infty).$ Assume that $X \in L^{\infty}(0,T ; L^p(\mathbb R^d))$
satisfies $\partial_t X \in L^1(0,T;W^{-m,q}(\mathbb R^d)).$ Then there holds:
\begin{itemize}
\item $X \in C([0,T] ; L^p(\mathbb R^d)-w)$
\item for arbitrary $\varphi \in C^{\infty}_c(\mathbb R^d)$ there holds:
\[
\left[ \int_{\mathbb R^d} X(\cdot,y) \varphi(y){\rm d}y \right]_{t_1}^{t_2}= \left\langle \partial_t X ,(t,y) \mapsto \varphi(y) \mathbf{1}_{[t_1,t_2]}(t) \right\rangle  \quad \forall \, 0 \leq t_1 < t_2 \leq T. 
\]
\end{itemize}
\end{lemma}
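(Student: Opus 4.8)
The statement is a standard regularization-in-time result, so the plan is to argue by a mollification argument in the time variable. First I would extend $X$ (and hence $\partial_t X$) by reflection or simply work on a slightly smaller interval $[\delta, T-\delta]$ and mollify: set $X_\eta = X \ast_t \chi_\eta$ where $\chi_\eta$ is a standard mollifier in $t$ only. Then $X_\eta$ is smooth in time with values in $L^p(\R^d)$, and $\partial_t X_\eta = (\partial_t X) \ast_t \chi_\eta$ converges to $\partial_t X$ in $L^1_{\mathrm{loc}}$ with values in $W^{-m,q}(\R^d)$. For each fixed test function $\varphi \in C^\infty_c(\R^d)$, the scalar function $t \mapsto \int_{\R^d} X_\eta(t,y)\varphi(y)\,\dy$ is $C^1$ with derivative $\langle \partial_t X_\eta(t), \varphi\rangle$ (the pairing being $W^{-m,q}$ against $W^{m,q'}$, which makes sense since $\varphi \in C^\infty_c \subset W^{m,q'}$), so the fundamental theorem of calculus gives the identity at the level $\eta>0$.

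\medskip

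The second step is to pass to the limit $\eta \to 0$. On the right-hand side this is immediate from $\partial_t X_\eta \to \partial_t X$ in $L^1(0,T;W^{-m,q})$ together with the boundedness of $t\mapsto\varphi$ as an element of $L^\infty(0,T;W^{m,q'})$, after rewriting the integral $\int_{t_1}^{t_2}\langle\partial_t X_\eta(s),\varphi\rangle\,\d s$ as the duality bracket $\langle \partial_t X_\eta, \varphi \mathbf 1_{[t_1,t_2]}\rangle$. On the left-hand side, one uses that $X_\eta \to X$ in, say, $L^1(0,T;L^p(\R^d))$ (a basic property of mollification of Bochner-space functions, using $X\in L^\infty(0,T;L^p) \subset L^1(0,T;L^p)$), so that $\int_{\R^d}X_\eta(\cdot,y)\varphi(y)\,\dy \to \int_{\R^d}X(\cdot,y)\varphi(y)\,\dy$ in $L^1(0,T)$, hence along a subsequence for a.e.\ $t$; combined with the identity this shows that $t\mapsto \int X(t,y)\varphi(y)\,\dy$ agrees a.e.\ with an absolutely continuous function, and the displayed jump formula holds for every pair $t_1<t_2$ (Lebesgue points, then all points by continuity of the representative). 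This also yields weak continuity against the dense family $C^\infty_c(\R^d)$.

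\medskip

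The final step is to upgrade weak-$\ast$-type continuity against $C^\infty_c$ to genuine continuity into $L^p(\R^d)$ endowed with its weak topology on all of $[0,T]$. Here one invokes the uniform bound $\|X(t)\|_{L^p}\le \|X\|_{L^\infty(0,T;L^p)}$: for any $\psi \in L^{p'}(\R^d)$ and any $\epsilon>0$, approximate $\psi$ by $\varphi\in C^\infty_c(\R^d)$ with $\|\psi-\varphi\|_{L^{p'}}<\epsilon$, and estimate $|\langle X(t)-X(t_0),\psi\rangle| \le |\langle X(t)-X(t_0),\varphi\rangle| + 2\epsilon\|X\|_{L^\infty(0,T;L^p)}$; the first term tends to $0$ as $t\to t_0$ by the previous step, giving the claim. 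One should also note, to handle the continuity at $t=0$ and the endpoint jump formula cleanly, that one works with the canonical weakly continuous representative in each equivalence class (which is unique); since $1<p<\infty$, $L^p(\R^d)$ is reflexive and separable, so bounded sets are weakly metrizable and no pathology arises. The only mildly delicate point — the ``main obstacle'' such as it is — is bookkeeping: making sure the duality pairing $W^{-m,q}$ versus $W^{m,q'}$ is the consistent extension of the $L^2$ pairing used implicitly when one writes $\int X\varphi$, and that the mollification commutes with $\partial_t$ in the distributional sense; both are routine but worth stating precisely.
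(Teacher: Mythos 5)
The paper does not prove this lemma at all --- it is explicitly ``stated without proof'' as folklore --- so there is nothing to compare against; your mollification-in-time argument is the standard proof of this statement and is correct. The only points worth tightening are routine: after obtaining the a.e.\ identity, one should fix the weakly continuous representative by redefining $X(t)$ on a common null set of times using a \emph{countable} dense family of test functions together with the uniform $L^p$ bound (which then holds for \emph{every} $t$ by weak lower semicontinuity of the norm), and the extension of the identity to the endpoints $t_1=0$, $t_2=T$ follows from the absolute continuity of $s\mapsto\int_0^s\|\partial_t X\|_{W^{-m,q}}$, exactly as you indicate.
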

This lemma  is part of the folklore and is stated without proof. Formally, it is tempting to invoke \eqref{RU1} and use Cauchy-Schwarz
inequality to obtain
\begin{equation*}
  \int_{\R^d}R|U|\dy\le
  \(\int_{\R^d}R\dy\)^{1/2}\(\int_{\R^d}R|U|^2\dy\)^{1/2}\lesssim \tau.
\end{equation*}
Then one may want to write, in view of \eqref{RU1},
\begin{equation*}
  \|\partial_t  R\|_{W^{-1,1}(\R^d)}=\frac{1}{\tau^2}\|\Div(RU)\|_{W^{-1,1}(\R^d)}\le
  \frac{1}{\tau^2}\|RU\|_{L^1(\R^d)} \lesssim \frac{1}{\tau}.
\end{equation*}
We see that we barely miss integrability on the right hand
side, due to a logarithmic divergence. Also, this estimate implicitly
relies on duality properties of $W^{-1,1}$, which is a delicate
matter. To overcome these issues, we estimate $\partial_tR$ at a lower
regularity level in order to obtain integrability in time, and
we consider estimates related to $L^p$ spaces with $1<p<\infty$ (for
reflexivity), and $p>d$ so we can use
Sobolev embeddings $W^{s,p}(\R^d)\subset W^{s-1,\infty}(\R^d)$. This
again reduces the level of regularity at which we estimate
$\partial_tR$. More precisely, we estimate
$\|\partial_tR\|_{W^{-4,p'}}$ for $d<p<\infty$, in
{\bf Proposition~\ref{prop:sob}} below.
\smallbreak

The core of the proof is then two successive applications of {\bf Lemma~\ref{lem_reg}}.
First, we obtain:
\begin{proposition} \label{prop_RU}
Let $\gamma_* = 2\gamma/(\gamma+1).$ There holds $RU = \sqrt{R} \sqrt{R}U \in C([0,\infty);L^{\gamma_*}(\mathbb R^d)-w)$ and,
given $p > d,$ there exists $K_p >0$ depending  on $C_0$ in (H4) and $\alpha,p,\eps,\nu$ for which:
\[ 
\left| \int_{\mathbb R^d} RU \cdot w \right| \leq K_p \left( (1+ t)^{(1-\alpha)_+} + \ln(1+t)\mathbf{1}_{\alpha =1} + \mathbf{1}_{\nu >0}(1+t)^{1/2}  \right) \|w\|_{W^{3,p}(\mathbb R^d)} ,
\]
for all $w \in [C^{\infty}_c(\mathbb R^d)]^d.$
\end{proposition}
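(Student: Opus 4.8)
The goal is to control $\int_{\R^d} RU\cdot w$ for test vector fields $w$, with a quantitative, polynomially-growing bound, and to deduce weak-in-time continuity of $RU$ in $L^{\gamma_*}$. The strategy is the one announced in the text: apply Lemma~\ref{lem_reg} to $X = RU$. So the plan splits into two parts: (a) verify the hypotheses of Lemma~\ref{lem_reg} for $X = RU = \sqrt R\,\sqrt R U$, namely $RU \in L^\infty_{\mathrm{loc}}(0,\infty; L^{\gamma_*}(\R^d))$ with $\partial_t(RU) \in L^1_{\mathrm{loc}}(0,\infty; W^{-m,q}(\R^d))$ for suitable $m,q$; (b) integrate the identity from Lemma~\ref{lem_reg} in time and track constants to get the stated estimate.

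First I would establish the spatial integrability of $RU$. Writing $RU = \sqrt R\cdot(\sqrt R U)$ and applying Hölder with exponents matched so that $\sqrt R \in L^2 \cap L^{2\gamma}$ (from (H1)) and $\sqrt R U \in L^2$ (from (H2)): interpolating $\sqrt R$ between $L^2$ and $L^{2\gamma}$ gives $\sqrt R \in L^r$ for $r\in[2,2\gamma]$, and pairing with $\sqrt R U \in L^2$ yields $RU \in L^{\gamma_*}$ with $\gamma_* = 2\gamma/(\gamma+1)$ exactly the Hölder conjugate bookkeeping ($\tfrac1{\gamma_*} = \tfrac1{2\gamma} + \tfrac12$). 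Quantitatively, $\|RU\|_{L^{\gamma_*}} \le \|\sqrt R\|_{L^{2\gamma}}\|\sqrt R U\|_{L^2}$, and by (H4) both factors are controlled: $\|\sqrt R U\|_{L^2}^2 \lesssim \tau^2 \EE \lesssim \tau^2(1+t)^{-\alpha} + \nu\tau^2(1+t)^{-1}(\cdots)$, giving the $(1+t)^{(1-\alpha)_+/? }$, $\ln$, and $\nu^{1/2}(1+t)^{1/2}$ contributions after using $\tau\sim t$; the $\|\sqrt R\|_{L^{2\gamma}}^{2\gamma}\lesssim \tau^{d(\gamma-1)}\EE$ factor must be folded in too. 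I would be careful here: the precise power of $(1+t)$ in the final bound comes from combining $\tau \sim t$ with the $\EE$-decay rate $(1+t)^{-\alpha}$, $\alpha = \min(2,d(\gamma-1))$, and tracking which of the three regimes ($\alpha<1$, $\alpha=1$, $\alpha>1$) one is in — this is exactly the source of the $(1-\alpha)_+$ and the $\mathbf 1_{\alpha=1}\ln$ terms.

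Next, for $\partial_t(RU)$ I would use the momentum equation in (H3): $\partial_t(\sqrt R\,\sqrt R U)$ equals a divergence of terms plus lower-order terms — $-\tfrac1{\tau^2}\Div(\sqrt R U\otimes\sqrt R U)$, $-\tfrac{\alpha}{2\tau^\alpha}yR$, $-\tfrac1{\tau^{d(\gamma-1)}}\nabla R^\gamma$, $\tfrac1{\tau^2}\Div(\tfrac{\eps^2}2\mathbb K + \nu\sqrt R\,\mathbb T^s)$, and $\tfrac{\nu\dot\tau}{\tau}\nabla R$. Each must be placed in some $W^{-m,q}$ with $L^1_{\mathrm{loc}}$-in-time coefficients: $\sqrt R U\otimes \sqrt R U \in L^1_{\mathrm{loc}} L^1$ by (H2); $yR \in L^\infty L^1$ weighted by $|y|$, controlled by the second-moment bound in (H4) (Cauchy–Schwarz: $\int |y| R \le (\int|y|^2R)^{1/2}$); $R^\gamma = (\sqrt R)^{2\gamma} \in L^\infty_{\mathrm{loc}} L^1$ by (H1); $\mathbb K$ involves $\sqrt R\nabla^2\sqrt R$ and $\nabla\sqrt R\otimes\nabla\sqrt R$, the latter in $L^\infty_{\mathrm{loc}}L^1$ by (H1), the former needing one more derivative off; $\sqrt R\,\mathbb T^s \in L^1_{\mathrm{loc}}L^1$ via Cauchy–Schwarz from $\sqrt R \in L^\infty L^2$ and $\mathbb T \in L^2_{\mathrm{loc}}L^2$ (H3). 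Since everything lands naturally in $L^1$ in space, I would invoke $W^{k,1}(\R^d) \hookrightarrow W^{k-1, q}$-type / or rather dualize: $L^1 = (L^\infty)' \supset (W^{m,p})'$ for $m > d/p$, so $L^1 \hookrightarrow W^{-m,p'}$ continuously once $W^{m,p}\hookrightarrow L^\infty$, i.e. $mp > d$; taking $p>d$, $m=1$ handles the zeroth-order and first-derivative terms, and the worst term $\Div(\cdots)$ of an $L^1$ quantity costs one more derivative, $\sqrt R\nabla^2\sqrt R$ costs two more after moving derivatives onto the test function — so $m=3$ suffices, matching the $W^{3,p}$ in the statement. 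The time integrability: all coefficients $\tau^{-2}$, $\tau^{-\alpha}$, $\tau^{-d(\gamma-1)}$, $\dot\tau/\tau$ times $L^1_{\mathrm{loc}}$ or $L^\infty$-in-time spatial norms are locally integrable, so $\partial_t(RU)\in L^1_{\mathrm{loc}}(0,\infty;W^{-3,p'})$. Then Lemma~\ref{lem_reg} gives $RU \in C([0,\infty);L^{\gamma_*}-w)$ and the representation $\int RU(t_2)\cdot w - \int RU(t_1)\cdot w = \langle \partial_t(RU), w\,\mathbf 1_{[t_1,t_2]}\rangle$.

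Finally I would take $t_1 = 0$, $t_2 = t$, start from $\int RU(0)\cdot w = \int \rho_0 u_0\cdot w$ which is finite, and bound $\langle\partial_t(RU), w\,\mathbf 1_{[0,t]}\rangle$ by $\|w\|_{W^{3,p}}\int_0^t(\text{sum of the coefficient}\times L^1\text{-space-norm terms})\,\d s$. Each time integral is then computed using $\tau(s)\sim s$ and the (H4) decay: e.g. $\int_0^t \tau^{-2}\|\sqrt R U\|_{L^2}^2\,\d s \lesssim \int_0^t \EE(s)\,\d s$, which is $\lesssim (1+t)^{(1-\alpha)_+} + \ln(1+t)\mathbf 1_{\alpha=1} + \nu(\text{terms})$; and the viscous Morawetz-type term $\int_0^t \tfrac{\dot\tau}{\tau}\|\nabla R\|\ldots$ and $\int_0^t \tau^{-2}\|\sqrt R\mathbb T^s\|_{L^1}\,\d s \le (\int_0^t\tau^{-4}\|\mathbb T\|_{L^2}^2)^{1/2}(\int_0^t\|\sqrt R\|_{L^2}^2)^{1/2} \lesssim (\int_0^\infty\DD)^{1/2}(1+t)^{1/2} \lesssim (1+t)^{1/2}$, which is where the $\mathbf 1_{\nu>0}(1+t)^{1/2}$ comes from. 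Collecting the worst of these exponents over all terms yields the claimed bound with a constant $K_p$ depending on $C_0,\alpha,p,\eps,\nu$.

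The main obstacle I anticipate is not conceptual but bookkeeping: getting the sharp time-growth exponent requires carefully matching the decay rate of $\EE$ (which itself has three regimes according to $\alpha \lessgtr 1$, and an extra $\nu$-branch) against the growth $\tau\sim t$ in every one of the five or six terms of the momentum equation, and checking that none produces a worse power than $(1+t)^{(1-\alpha)_+}$, $\ln(1+t)\mathbf 1_{\alpha=1}$, or $\nu^{?}(1+t)^{1/2}$. A secondary subtlety is justifying rigorously — rather than formally as in the text's scratch computation — that the $W^{-1,1}$ heuristic is replaced cleanly by the $W^{-3,p'}$ estimate with $p>d$, so that all dualities used are between reflexive spaces and the Sobolev embedding $W^{3,p}\hookrightarrow W^{2,\infty}$ is available; this is exactly why $m=3$ and $p>d$ are forced, and I would state that choice up front.
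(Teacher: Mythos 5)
Your plan is correct and follows essentially the same route as the paper's proof: apply Lemma~\ref{lem_reg} to $X=RU$ (with $RU\in L^\infty_{\rm loc}L^{\gamma_*}$ from H\"older between $\sqrt R\in L^{2\gamma}$ and $\sqrt RU\in L^2$), decompose $\partial_t(RU)$ via the momentum equation into the Korteweg/viscous, $\nabla R$, convection, confinement and pressure terms, and bound each time integral using $\int_0^t\EE$, the $L^1$ bound on $\DD$ (which is indeed the source of the $\mathbf 1_{\nu>0}(1+t)^{1/2}$ term via Cauchy--Schwarz in time), the second-moment bound, and the embedding $W^{3,p}\hookrightarrow W^{2,\infty}$ for $p>d$. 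The only remaining work is the term-by-term bookkeeping you already anticipate, which matches the paper's computation.
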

\begin{proof}
Since we have $RU \in L^{\infty}_{\rm loc}(0,\infty); L^{\gamma_*}(\mathbb R^d))$, a direct application of {\bf Lemma \ref{lem_reg}} yields our result if we prove, for any $t >0$, that:
\[
\left| \langle \partial_t (RU) , w \rangle  \right|  \leq K_p  \left( (1+ t)^{(1-\alpha)_+} + \ln(1+t)\mathbf{1}_{\alpha =1} + \mathbf{1}_{\nu >0}(1+t)^{1/2}  \right)
\sup_{s \in (0,t)} \|w(s,\cdot)\|_{W^{3,p}(\mathbb R^d)} 
\]
for arbitrary $w  \in C^{\infty}_c((0,t) \times \mathbb R^d)^d$.  
To this respect, we will make repeated use without mention of the
property, stemming from \eqref{RU:decay}:
\begin{align*}
\int_{0}^t \mathcal E(s) \, {\rm d}s & \leq C_0 \int_0^{t} \left[ \dfrac{1}{(1+s)^{\alpha}} + \dfrac{\nu}{(1+s)}\left(\mathbf{1}_{\alpha \neq 1} + \ln(1+s) \mathbf{1}_{\alpha=1} \right)\right]{\rm d}s \\
& \leq K_p \left( (1+t)^{(1-\alpha)_+} + \ln(1+t)\mathbf{1}_{\alpha =1}  + \mathbf{1}_{\nu >0} \sqrt{1+t}\right).
\end{align*}
So, given $t>0$ and $w \in C^{\infty}_c((0,t) \times \mathbb R^d)^d,$ we apply \eqref{RU2}
to split: 
\[
\langle \partial_t (RU), w \rangle = \sum_{k=1}^{5} \langle L_i ,w \rangle ,
\]
where:
\begin{align*}
\langle L_1 , w \rangle  &= - \int_{0}^t\dfrac{1}{\tau^2} \int_{\mathbb R^d} (\dfrac{\eps^2}{2} \mathbb K + \nu \sqrt{R}\mathbb S) : \nabla w,  \\
\langle L_2 , w \rangle  &= - \int_0^t \dfrac{\nu \dot{\tau}}{\tau} \int_{\mathbb R^d} R \Div w ,\\
\langle L_3 , w \rangle  &= \int_{0}^t \dfrac{1}{\tau^2} \int_{\mathbb R^d} \sqrt{R}U \otimes \sqrt{R}U : \nabla w ,\\
\langle L_4 , w \rangle &= - \int_{0}^t \frac{\alpha}{2 \tau^\alpha} \int_{\mathbb R^d}  R y \cdot w ,\\
\langle L_5, w \rangle & = \int_{0}^t \dfrac{1}{\tau^{d(\gamma-1)}} \int_{\mathbb R^d} R^{\gamma} \cdot \Div w.    
\end{align*}
We now estimate  these five terms independently.

\medskip

Concerning $L_1,$ we split $L_1 =  L_{1}[\mathbb K] +  L_1[\mathbb S]$ with obvious notations. First we bound:
\begin{align*}
| \langle L_1[\mathbb K] , w \rangle |  & = 
\left | \int_{0}^t\dfrac{\eps^2 }{2\tau^2}\int_{\mathbb R^d} 
\left( \sqrt{R} \nabla^2 \sqrt{R} - \nabla \sqrt{R} \otimes \nabla \sqrt{R}\right)  :  
\nabla w 
\right| \\
& \lesssim \int_0^t \dfrac{1}{\tau^2} 
\int_{\mathbb R^d}  \left( |\eps \nabla \sqrt{R}|^2 |\nabla w| + \eps \sqrt{R}  |\eps \nabla \sqrt{R}| |\nabla^2 w| \right)\\
& \lesssim  \int_0^t \left( \dfrac{\eps^2}{\tau^2} \int_{\mathbb R^d}  |\nabla \sqrt{R}|^2   + \dfrac{\eps^2}{\tau^2} \int_{\mathbb R^d} R\right)
\sup_{[0,t]} \|w\|_{W^{2,\infty}(\mathbb R^d)}.
\end{align*}
We remind here that $R$ is a probability measure and the definition \eqref{pseudo-energy} of $\EE.$
This entails by Sobolev embedding that:
\begin{align*}
| \langle L_1[\mathbb K] , w \rangle | & \leq 
C_p \left(  \int_0^t \mathcal E + \dfrac{\eps^2}{\tau^2}   \right) 
\sup_{[0,t]} \|w\|_{W^{3,p}(\mathbb R^d)} \\
&\leq K_p
\left( (1+t)^{(1-\alpha)_+}  + \ln(1+t)\, \mathbf{1}_{\alpha=1} + \mathbf{1}_{\nu >0} \sqrt{1+t} \right) \sup_{[0,t]} \|w\|_{W^{3,p}(\mathbb R^d)},
\end{align*}
where we used that $\tau^{-1}$ decays like $1/(1+t)$ to integrate $1/\tau^{2}$.
Similarly, we apply the control induced by $\mathcal D$ to bound:
\begin{align*}
| \langle L_1[\mathbb S] , w \rangle | &  \leq  \int_{0}^t \dfrac{\nu}{\tau^2} \int_{\mathbb R^d}  \sqrt{R} \mathbb S: \nabla w \\
& \leq \sqrt{\nu}\int_0^{t} \left( \dfrac{\nu}{\tau^4}  \int_{\mathbb R^d}  |\mathbb T^s|^2 \right)^{1/2}\left(\int_{\mathbb R^d} R \right)^{1/2} \sup_{[0,t]} \|\nabla w\|_{L^{\infty}(\mathbb R^d)}\\
& \leq C_p \sqrt{\nu t} \left( \int_0^{\infty} \mathcal D\right)^{\frac 12} 
\sup_{[0,t]} \|w\|_{W^{2,p}(\mathbb R^d)}.
\end{align*}
Combining the previous two estimates yields finally:
\begin{equation} \label{eq_estL1}
|\langle L_1,w \rangle| \leq K_p \left( (1+t)^{(1-\alpha)_+} +
  \ln(1+t)\, \mathbf{1}_{\alpha=1} + \mathbf{1}_{\nu >0} \sqrt{1+t}
\right) \sup_{[0,t]} \|w\|_{W^{3,p}(\mathbb R^d)}. 
\end{equation}
To handle $L_2,$ we use that $R$ has constant mass and the growth of $\tau$
at infinity:
\begin{align*}
|\langle L_2, w \rangle | & \leq \left| \int_0^t \dfrac{\nu \dot{\tau}}{\tau} \int_{\mathbb R^d} R \Div w\right| 
\leq  C \nu \int_0^t \dfrac{1}{\tau} \sup_{[0,t]}\|\Div w\|_{L^{\infty}(\mathbb R^d)}
\leq K_p \nu \ln(1+t) \sup_{[0,t]}\|w\|_{W^{2,p}}.
\end{align*}
We proceed with $L_3.$ First, we make controlled quantities appear via H\"older inequality:
\begin{align*}
|\langle L_3 , w \rangle| &\leq \left| \int_0^t \dfrac{1}{\tau^2} \int_{\mathbb R^d} \sqrt{R}U \otimes \sqrt{R}U : \nabla w \right| \\
& \leq  \int_0^t  \dfrac{1}{\tau^2} \int_{\mathbb R^d} |\sqrt{R}U|^2 
\, \sup_{[0,t]}  \|\nabla w\|_{L^{\infty}(\mathbb R^d)} \\
& \leq  \left(\int_0^t  \mathcal E \, \d s \right)
\, \sup_{[0,t]}  \|\nabla w\|_{L^{\infty}(\mathbb R^d)} \\
& \leq K_p \left( (1+t)^{(1-\alpha)_+} + \ln(1+t)\, \mathbf{1}_{\alpha=1} + \mathbf{1}_{\nu>0} \sqrt{1+t}   \right)
\, \sup_{[0,t]}  \|w\|_{W^{2,p}(\mathbb R^d)}.
\end{align*}
Concerning $L_4,$ we have
\begin{align*}
|\langle L_4,w \rangle| 
&\leq \frac12\int_0^t \dfrac{\alpha}{\tau^{\alpha}} \left( \int_{\mathbb R^d} R \right)^{1/2} \left( \int_{\mathbb R^d} R |y|^2 \right)^{1/2}  \sup_{[0,t]}\|w\|_{L^{\infty}(\mathbb R^d)} \\
& \leq \frac 12 \left(\int_{0}^{t} \dfrac{1}{\tau^{\alpha/2}} \sqrt{\mathcal E}\, \d s    \right) \sup_{[0,t]} \|w\|_{W^{1,p}(\mathbb R^d)} \\
& \leq K_p\left( \int_{0}^{t}  
\left( \dfrac{1}{(1+s)^{\alpha}} +  \dfrac{\nu (1+\ln(1+s))}{(1+s)^{(1+\alpha)/2}} \right)   \d s    \right) \sup_{[0,t]} \|w\|_{W^{1,p}(\mathbb R^d)} \\
& \leq K_p \left( 
(1+t)^{(1-\alpha)_+} + \ln(1+t)\mathbf{1}_{\alpha=1} +  \mathbf{1}_{\nu >0} \sqrt{1+t} 
   \right) \sup_{[0,t]}\|w\|_{W^{1,p}(\mathbb R^d)} .
\end{align*}

Finally, for $L_5,$ we obtain directly that:
\begin{align*}
|\langle L_5, w \rangle| 
&\leq (\gamma-1) \left(\int_0^t \mathcal E \, \d s \right) \sup_{[0,t]} \|\Div w\|_{L^{\infty}} \\ 
&\leq K_p \left( (1+t)^{(1-\alpha)_+}  + \ln(1+t)\, \mathbf{1}_{\alpha=1} + \mathbf{1}_{\nu >0} \sqrt{1+t} \right) \sup_{[0,t]}\|w\|_{W^{2,p}(\mathbb R^d)}.
\end{align*}

This completes the proof.
\end{proof}

We apply now this control of $RU$ in order to handle $\partial_t R.$
We have:

\begin{proposition}\label{prop:sob}
For arbitrary $\phi \in C^{\infty}_c(\mathbb R^d)$ the function 
\[
R_{\phi} : t \mapsto \int_{\mathbb R^d} R(t,y) \phi(y){\rm d}y 
\]
enjoys the properties:
\begin{itemize}
\item[i)] $R_{\phi} \in C([0,\infty))$,
\item[ii)] $R_{\phi}$ converges to some limit $R^{\infty}_{\phi}$ as $t \to \infty$, satisfying:
\[
|R^{\infty}_{\phi}| \leq C_{p,\infty} \|\phi\|_{W^{4,p}(\mathbb R^d)},
\]
for a constant $C_{p,\infty}$ depending on $p > d$, but independent of $\phi.$
\end{itemize}
\end{proposition}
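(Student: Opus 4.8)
The plan is to perform a second application of Lemma~\ref{lem_reg}, now to $X=R$, using the bound on $RU$ established in Proposition~\ref{prop_RU} to control $\partial_t R$ in $L^1$ in time at a sufficiently negative regularity. First I would record the functional setting: since each $R(t,\cdot)$ is a probability density, $R\in L^\infty(0,\infty;L^1(\R^d))$, and by (H1) also $R\in L^\infty_{\rm loc}(0,\infty;L^\gamma(\R^d))$ with $\gamma>1$, so on every bounded interval we sit in a reflexive space. Testing the continuity equation \eqref{RU1} against $\phi\in C^\infty_c(\R^d)$ gives, for a.e.\ $t>0$,
\[
\langle\partial_t R(t),\phi\rangle=\frac{1}{\tau(t)^2}\int_{\R^d}RU\cdot\nabla\phi\,\dy,
\]
and Proposition~\ref{prop_RU} applied with the vector field $w=\nabla\phi$ (note $\|\nabla\phi\|_{W^{3,p}}\lesssim\|\phi\|_{W^{4,p}}$) yields
\[
\big|\langle\partial_t R(t),\phi\rangle\big|\le g(t)\,\|\phi\|_{W^{4,p}(\R^d)},\qquad g(t):=\frac{K_p}{\tau(t)^2}\Big((1+t)^{(1-\alpha)_+}+\ln(1+t)\,\mathbf{1}_{\alpha=1}+\mathbf{1}_{\nu>0}(1+t)^{1/2}\Big),
\]
i.e.\ $\|\partial_t R(t)\|_{W^{-4,p'}}\le g(t)$ with $1/p+1/p'=1$. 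The four derivatives are precisely the three spent in Proposition~\ref{prop_RU} plus the one turning $w=\nabla\phi$ back into $\phi$, which is the regularity level announced before the statement.

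The crucial step is then to check that $g\in L^1(0,\infty)$. Near $t=0$ this is immediate, $\tau$ being smooth with $\tau\ge1$; for large $t$, Lemma~\ref{lem:tau} gives $\tau(t)\sim t$, so $\tau(t)^{-2}\sim(1+t)^{-2}$, which beats every growing factor coming from (H4): for $\nu=0$ one gets $g(t)\lesssim(1+t)^{-1-\alpha}$ if $0<\alpha<1$, $g(t)\lesssim(1+t)^{-2}\ln(1+t)$ if $\alpha=1$, $g(t)\lesssim(1+t)^{-2}$ if $\alpha>1$; for $\nu>0$ one has $g(t)\lesssim(1+t)^{-\min(1+\alpha,3/2)}$. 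Since $\alpha>0$, all these exponents exceed $1$, hence $g\in L^1(0,\infty)$ and $\partial_t R\in L^1(0,\infty;W^{-4,p'}(\R^d))$. This time-integrability, bought by the two powers of $\tau$ gained from \eqref{RU1} at the price of working at low regularity, is exactly what fails (only logarithmically) at the naive $W^{-1,1}$ level discussed above; it is the only genuinely delicate point in the proof.

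It then remains to feed this into Lemma~\ref{lem_reg} with $X=R$. Its first conclusion gives $R\in C([0,T];L^\gamma(\R^d)-w)$ on every $[0,T]$; since $C^\infty_c(\R^d)\subset L^{\gamma'}(\R^d)$, this yields $R_\phi\in C([0,\infty))$, which is (i). For (ii), the second conclusion of Lemma~\ref{lem_reg} together with $\partial_t R\in L^1(0,\infty;W^{-4,p'})$ gives, for $0\le t_1<t_2$,
\[
R_\phi(t_2)-R_\phi(t_1)=\int_{t_1}^{t_2}\langle\partial_t R(s),\phi\rangle\,\d s,\qquad\text{so}\qquad\big|R_\phi(t_2)-R_\phi(t_1)\big|\le\|\phi\|_{W^{4,p}}\int_{t_1}^{t_2}g(s)\,\d s,
\]
and the right-hand side tends to $0$ as $t_1,t_2\to\infty$ because $g\in L^1(0,\infty)$; thus $(R_\phi(t))_{t>0}$ is Cauchy and converges to some limit $R^\infty_\phi$. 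For the quantitative bound I would simply use that each $R(t,\cdot)$ is a probability measure, so $|R_\phi(t)|\le\|\phi\|_{L^\infty}\le C_{p,\infty}\|\phi\|_{W^{4,p}}$ by Sobolev embedding ($p>d$), and let $t\to\infty$, obtaining $|R^\infty_\phi|\le C_{p,\infty}\|\phi\|_{W^{4,p}(\R^d)}$ with $C_{p,\infty}$ depending on $p$ but not on $\phi$. Granting Proposition~\ref{prop_RU} and Lemma~\ref{lem:tau}, everything except the $L^1$-bound on $g$ is routine bookkeeping with Lemma~\ref{lem_reg}, which is why that estimate is the main obstacle.
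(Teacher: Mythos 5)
Your proposal is correct and follows essentially the same route as the paper: test \eqref{RU1} against $\phi$, invoke Proposition~\ref{prop_RU} with $w=\nabla\phi$ (costing one extra derivative, hence $W^{4,p}$), verify that the resulting bound on $\langle\partial_t R,\phi\rangle$ is integrable on $(0,\infty)$ thanks to $\tau(t)\sim t$, and conclude via Lemma~\ref{lem_reg}. The only cosmetic difference is the final quantitative bound, where you estimate $|R_\phi(t)|\le\|\phi\|_{L^\infty}\lesssim\|\phi\|_{W^{4,p}}$ directly from the probability-measure property, while the paper writes $|R^\infty_\phi|\le|R_\phi(0)|+\int_0^\infty g$; both give the stated conclusion.
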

This latter result shows the convergence of $(R(t,\cdot))_{t>0}$ through the mapping
$\phi \mapsto R_{\phi}^\infty$ in $W^{-{4,p'}}(\mathbb R^d).$ This mapping is bound to be a  probability measure thanks to the tightness of $(R(t,\cdot))_{t>0}$. Hence, the proof of this proposition ends up this part.
 
\begin{proof}
Similarly to the previous proof, we have here that $R \in L^{\infty}_{\rm loc}(0,\infty;L^{\gamma}(\mathbb R^d))$ and, thanks to Equation~\eqref{RU1} 
with (H2), there holds: $\partial_t R \in L^1_{\rm loc}(0,\infty;W^{-1,\gamma_*'}(\mathbb R^d)) $ (where $\gamma_*'$ is the conjugate exponent of $\gamma_*$). Applying {\bf Lemma \ref{lem_reg}}, we have then that, for arbitrary $\phi \in C^{\infty}_c(\mathbb R^d)$ and $t_1 < t_2$ there holds
$R_{\phi} \in C([0,\infty))$ and 
\[
R_{\phi}(t_2) - R_{\phi}(t_1) = \int_{t_1}^{t_2} \dfrac{1}{\tau^2} RU \cdot \nabla \phi .
\] 
In this equality, we apply the bound of {\bf Proposition \ref{prop_RU}} with 
$p>d.$ This yields:
\begin{multline*}
| R_{\phi}(t_2) - R_{\phi}(t_1)  | \\ \leq K_p \left( \int_{t_1}^{t_2} \dfrac{   (1+ t)^{(1-\alpha)_{+}} + \ln(1+t)\mathbf{1}_{\alpha=1} + \mathbf{1}_{\nu >0}(1+t)^{1/2}}{\tau(t)^2} \, {\rm d}t \right)  \|\nabla \phi\|_{W^{3,p}(\mathbb R^d)} .
\end{multline*}
Since $\tau \sim t$ for  large $t$, we obtain that 
\[
t \mapsto \dfrac{  (1+ t)^{(1-\alpha)_{+}} + \ln(1+t)\mathbf{1}_{\alpha=1} + \mathbf{1}_{\nu >0}(1+t)^{1/2} }{\tau(t)^2} \in L^1([0,\infty)).
\]
By a standard domination argument, we infer the conclusions of our proposition: $R_{\phi}$ admits a limit $R_{\phi}^{\infty}$ when $t \to \infty$, and 
\begin{align*}
|R_{\phi}^{\infty}| 
  &\leq \left| \int_{\mathbb R^d} R(0,\cdot) \phi \right|\\
  &\quad+ K_p \left( \int_0^{\infty} \dfrac{ (1+ t)^{(1-\alpha)_{+}} + \ln(1+t)\mathbf{1}_{\alpha=1} + \mathbf{1}_{\nu >0}(1+t)^{1/2}}{\tau(t)^2} \, {\rm d}t \right) \|\phi\|_{W^{4,p}(\mathbb R^d)} \\
&\leq C_{p,\infty} \|\phi\|_{W^{4,p}(\mathbb R^d)}.
\end{align*}
\end{proof} 

As a straightforward corollary to the above computations, 
we also have the following convergence result for any $p >d$: 
\[
\|R(t,\cdot) - R_{\infty}\|_{W^{-4,p'}(\mathbb R^d)} \leq 
 K_p \left( \dfrac{1}{(1+t)^{\min(\alpha,1)}} + \dfrac{\ln(1+t)}{(1+t)} \mathbf{1}_{\alpha =1} + \dfrac{\mathbf{1}_{\nu>0}}{\sqrt{1+t}}\right), \quad \forall \, t >0.
\]

Furthermore, for sufficiently small $\gamma,$ we can also state more properties of the asymptotic $R_{\infty}.$ Indeed, from (H4) we infer that:
\[
\int_{\mathbb R^d} R^{\gamma}(t,\cdot) \leq C_0  \tau^{d(\gamma-1)} \left( \dfrac{1}{(1+t)^{\alpha}} + \dfrac{\nu}{(1+t)} \left( \mathbf{1}_{\alpha\neq 1} + \ln(1+t)\mathbf{1}_{\alpha=1} \right)\right) \quad \forall \, t >0.
\]
Consequently, when $\nu >0,$ if $d(\gamma-1) \leq 1$ ({\em i.e.} $\gamma < 1+1/d$ and $\alpha =d(\gamma-1)$)  we obtain that $(R(t,\cdot))_{t>0}$ is bounded in $L^{\gamma}(\mathbb R^d).$ 
While, when $\nu=0,$ the same holds true for  $d(\gamma-1) \leq 2$ {\em i.e.} $\gamma \leq 1+ 2/d.$ In both cases, the uniform $L^{\gamma}$-bound ensures that the asymptotic profile
$R_{\infty}$ is not only a probability measure but also an
$L^1$-function.
Finally, when $\eps >0$, $\nu =0$ and $\gamma \ge 1 + 2/d$, we have $\alpha = 2$ and from (H4) we obtain
$$
\frac{\eps^2}{2}\int_{\R^d} |\nabla \sqrt R(t,\cdot)|^2
\le C_0  \dfrac{\tau^2(t)}{(1+t)^{2}}  \quad \forall \, t >0,
$$
which implies that $(\nabla \sqrt R(t,\cdot))_{t>0}$ is bounded in $L^{2}(\mathbb R^d)$ and thus that $R_\infty \in L^1(\R^d)$ also in this case.

%%%%%%%%%%%%%%%%%

\section{Nonlinear Schr\"odinger equation}
\label{sec:nls}

\subsection{A priori estimates}

For $\tau$ solution to \eqref{eq:tau}, and  $\psi^\eps$ solution to
\eqref{eq:nls} with $\psi_0^\eps\in \Sigma$ as defined in
Theorem~\ref{theo:NLS}, $\Psi^\eps$ given by 
\eqref{eq:psiPsi} solves
\begin{equation}
  \label{eq:Psi}
  i\eps\partial_t \Psi^\eps + \frac{\eps^2}{2\tau(t)^2}\Delta \Psi^\eps =
  \frac{\alpha}{\tau(t)^{2\alpha} }\frac{|y|^2}{2}\Psi^\eps
    +\frac{\mu^\eps}{\tau(t)^{d\si}}|\Psi^\eps|^{2\si}\Psi^\eps,\quad
    \Psi^\eps_{\mid t=0} = \frac{\psi_0^\eps}{\|\psi_0^\eps\|_{L^2(\R^d)}},
\end{equation}
where
\begin{equation*}
  \mu^\eps = \lambda \|\psi_0^\eps\|_{L^2(\R^d)}^{2\si}. 
\end{equation*}
The pseudo-energy for $\Psi^\eps$ is
\begin{equation}\label{eq:energyPsi}
\begin{aligned}
  \EE^\eps(\Psi^\eps) &= \frac{\eps^2}{2\tau(t)^2}\|\nabla
  \Psi^\eps(t)\|_{L^2(\R^d)}^2 +
  \frac{\alpha}{2\tau(t)^{2\alpha}}\int_{\R^d} |y|^2
                        |\Psi^\eps(t,y)|^2\dy\\
  &\quad+\frac{\mu^\eps}{(\si+1)\tau(t)^{d\si}} \int_{\R^d}
  |\Psi^\eps(t,y)|^{2\si+2}\dy ,
\end{aligned}
\end{equation}
and satisfies
\begin{equation*}
  \frac{\d}{\d t}\EE^\eps(\Psi^\eps) +\DD^\eps(\Psi^\eps)=0,
\end{equation*}
where the dissipation is given by
\begin{equation}\label{eq:dissipPsi}
  \begin{aligned}
     \DD^\eps(\Psi^\eps) = \frac{\dot \tau}{\tau}\Bigg( & \frac{\eps^2}{\tau(t)^2}\|\nabla
  \Psi^\eps(t)\|_{L^2(\R^d)}^2 +
  \frac{\alpha^2}{\tau(t)^{2\alpha}}\int_{\R^d} |y|^2
                                                      |\Psi^\eps(t,y)|^2\dy\\
  & +\frac{d\si \mu^\eps}{(\si+1)\tau(t)^{d\si}} \int_{\R^d}
  |\Psi^\eps(t,y)|^{2\si+2}\dy \Bigg).
  \end{aligned}
 \end{equation}
In the case of the nonlinear Schr\"odinger equation, justifying the
above identity is standard at the level of regularity that we consider, and we refer to \cite{CazCourant} for
details. We infer:
\begin{proposition}\label{prop:nls}
    Let $d\ge 1$, $\eps,\lambda>0$, $0<\si<\frac{2}{(d-2)_+}$, and
    $\psi_0^\eps\in \Sigma$. Then for $\tau$ solution to
    \eqref{eq:tau} with $\alpha = \min \(\tfrac{d\si}{2},1\)$, the
    function $\Psi^\eps$ defined by \eqref{eq:psiPsi}  satisfies
    \begin{equation*}
 \EE^\eps(\Psi^\eps(t)) \le \frac{
   \EE^\eps(\Psi^\eps(0))}{\tau(t)^{\min(2,d\si)}},\quad \forall t\ge
 0,\quad \int_0^\infty  \DD^\eps(\Psi^\eps(t))\d t<\infty,
\end{equation*}
where $\EE^\eps$ is given by \eqref{eq:energyPsi} and $\DD^\eps$ is
given by \eqref{eq:dissipPsi}.
\end{proposition}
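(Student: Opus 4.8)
The plan is to read off both assertions directly from the pseudo-energy identity $\frac{\d}{\dt}\EE^\eps(\Psi^\eps)+\DD^\eps(\Psi^\eps)=0$ recorded just before the statement (valid at the $\Sigma$-regularity level, as in \cite{CazCourant}), combined with the precise algebraic form of $\EE^\eps$ and $\DD^\eps$ in \eqref{eq:energyPsi}--\eqref{eq:dissipPsi} and with the properties of $\tau$ from Lemma~\ref{lem:tau}.

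First I would record the elementary observations. Since $\psi_0^\eps\in\Sigma$ and $0<\si<2/(d-2)_+$, the Sobolev embedding $H^1(\R^d)\hookrightarrow L^{2\si+2}(\R^d)$ gives $\EE^\eps(\Psi^\eps(0))<\infty$. From \eqref{eq:tau} we have $\ddot\tau>0$ with $\dot\tau(0)=0$, hence $\dot\tau>0$ and $\tau$ is strictly increasing on $(0,\infty)$, with $\tau(0)=1$; in particular all three pieces of $\DD^\eps(\Psi^\eps)$ are nonnegative (recall $\lambda>0$), so $\DD^\eps(\Psi^\eps)\ge0$. The identity then shows that $t\mapsto\EE^\eps(\Psi^\eps(t))$ is nonincreasing, and integrating it over $(0,T)$ gives $\int_0^T\DD^\eps(\Psi^\eps(t))\,\dt=\EE^\eps(\Psi^\eps(0))-\EE^\eps(\Psi^\eps(T))\le\EE^\eps(\Psi^\eps(0))$; letting $T\to\infty$ yields $\int_0^\infty\DD^\eps(\Psi^\eps(t))\,\dt<\infty$.

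For the decay rate I would compare $\DD^\eps$ with $\EE^\eps$ term by term: each of the three nonnegative pieces of $\DD^\eps(\Psi^\eps)$ equals $\frac{\dot\tau}{\tau}$ times a constant times the corresponding piece of $\EE^\eps(\Psi^\eps)$, with constant $2$ for the gradient piece, $2\alpha$ for the second-moment piece, and $d\si$ for the nonlinear piece. Therefore
\[
\DD^\eps(\Psi^\eps)\;\ge\;\min(2,2\alpha,d\si)\,\frac{\dot\tau}{\tau}\,\EE^\eps(\Psi^\eps),
\]
and the choice $\alpha=\min(d\si/2,1)$, i.e. $2\alpha=\min(d\si,2)$, makes $\min(2,2\alpha,d\si)=\min(2,d\si)$. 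Plugging this into the pseudo-energy identity gives the differential inequality $\frac{\d}{\dt}\EE^\eps(\Psi^\eps)\le-\min(2,d\si)\frac{\dot\tau}{\tau}\EE^\eps(\Psi^\eps)$, and a Gr\"onwall argument together with $\tau(0)=1$ yields
\[
\EE^\eps(\Psi^\eps(t))\;\le\;\EE^\eps(\Psi^\eps(0))\,\exp\(-\min(2,d\si)\int_0^t\frac{\dot\tau(s)}{\tau(s)}\,\d s\)\;=\;\frac{\EE^\eps(\Psi^\eps(0))}{\tau(t)^{\min(2,d\si)}},
\]
which is the asserted bound.

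The only genuinely delicate point is the rigorous justification of the pseudo-energy identity at regularity $\Sigma$ --- differentiating the weighted moment and the $L^{2\si+2}$ quantity along the flow of \eqref{eq:Psi} and integrating by parts --- but this is classical for energy-subcritical defocusing nonlinear Schr\"odinger equations and is precisely what \cite{CazCourant} supplies; everything else reduces to the term-by-term comparison above and a one-line integration.
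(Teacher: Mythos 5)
Your proof is correct and follows exactly the route the paper intends: the paper states the pseudo-energy identity, refers to \cite{CazCourant} for its justification at $\Sigma$-regularity, and then "infers" the proposition via precisely the term-by-term comparison $\DD^\eps\ge\min(2,2\alpha,d\si)\frac{\dot\tau}{\tau}\EE^\eps$ and a Gr\"onwall integration, mirroring the computation \eqref{eq_dEEt0}--\eqref{RU:decay} done for the fluid system in the introduction (here with no viscous source term, so the identity is exact and the integration clean). Your verification that the choice $\alpha=\min(d\si/2,1)$ makes the three ratios collapse to $\min(2,d\si)$, and that $\int_0^\infty\DD^\eps\le\EE^\eps(\Psi^\eps(0))$ by nonnegativity of $\DD^\eps$, is exactly what is needed.
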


The above proposition provides the same a priori estimates as we have
used in the case of the Euler-Korteweg system. More precisely, for $(\ro,u)$ related
to $\psi^\eps$ thanks to Madelung transform like in
\eqref{eq:madelung}, we note that Madelung transform for
$\Psi^\eps$ provides
\begin{equation*}
  R^\eps = |\Psi^\eps|^2,\quad R^\eps U^\eps = \eps\IM \(\bar
  \Psi^\eps\nabla\Psi^\eps\), 
\end{equation*}
and thus, $(\ro,u)$ and $(R^\eps,U^\eps)$ are related through
\begin{equation*}
  \ro(t,x)=\frac{1}{\tau(t)^d}R^\eps\(t,\frac{x}{\tau(t)}\),\quad
  u(t,x) = \frac{1}{\tau(t)}U^\eps\(t,\frac{x}{\tau(t)}\)+\frac{\dot
    \tau(t)}{\tau(t)} x,
\end{equation*}
which is exactly \eqref{eq:uvFluid}. Theorem~\ref{theo:NLS} then
appears as a direct consequence of Theorem~\ref{theo:rigidity} in the
Euler-Korteweg case.
\subsection{Interpretation}

We now comment on some consequences of
Proposition~\ref{prop:nls}.

\subsubsection{Long range scattering}
Suppose $\eps=1$. In the case $\si\ge 2/d$, a complete scattering
theory is available for \eqref{eq:nls}, in the sense that given
$\psi_0\in \Sigma$, there exists $\psi_+\in \Sigma$ such that
\begin{equation*}
  \left\|e^{-i\frac{t}{2}\Delta}\psi(t)-\psi_+\right\|_{\Sigma}\Tend t \infty
  0,\quad \|f\|_{\Sigma}^2:= \|f\|_{L^2(\R^d)}^2 + \|\nabla
    f\|_{L^2(\R^d)}^2 +\|x f\|_{L^2(\R^d)}^2 .
  \end{equation*}
  As a matter of fact, the same is true under the weaker assumption
  $\si\ge \si_0(d)$ for some $1/d<\si_0(d)<2/d$; see
  e.g. \cite{CazCourant}. A weaker convergence (in $L^2(\R^d)$ instead
  of $\Sigma$, with $\psi_+\in L^2(\R^d)$, \cite{TsYa84}, and even
  $\psi_+\in H^1(\R^d)$, \cite{GinibreDEA}) holds for $\si>1/d$. For
  $0<\si\le 1/d$, long range effects are present, as evoked in the
  introduction. In the case of \eqref{eq:nls}, the long range effects
  are understood only in the critical case $\si=1/d$: see \cite{HN06}
  and references therein. See also \cite{MMU19} and references therein
  for  the existence of 
  wave operators (Cauchy problem with prescribed behavior at
  $t=\infty$ instead of $t=0$) in the case $\si=1/d$.
It seems that so far, the long
  range scattering has not been studied for \eqref{eq:nls} in
  the case $0<\si<1/d$. The lack of regularity of the nonlinearity is
  an important technical difficulty, which was bypassed in the
  analogous case of (generalized) Hartree nonlinearities, see
\cite{GV00a,GV00b,GV01} and references therein.
\smallbreak

  In the case $0<\si\le 1/d$, Proposition~\ref{prop:nls} yields, for
  $\eps=1$,
  \begin{equation*}
    \int_{\R^d} |y|^2|\Psi(t,y)|^2\dy +
    \|\Psi(t)\|_{L^{2\si+2}(\R^d)}^{2\si+2}\lesssim 1,\quad \forall
    t\ge 0,
  \end{equation*}
and Theorem~\ref{theo:NLS} shows the convergence of
$|\Psi(t,\cdot)|^2$ in the limit $t\to \infty$, indicating that for
the full range $0<\si\le 1/d$, long range effects do not affect the
dispersive behavior, and present at leading order only in a phase modification.

\subsubsection{Semi-classical limit}
Consider the limit $\eps\to 0$ in \eqref{eq:nls}, for initial data
under a WKB form,
\begin{equation*}
  \psi^\eps_0(x) = a_0(x)e^{i\phi_0(x)/\eps},
\end{equation*}
with $a_0$ and $\phi_0$ smooth and independent of $\eps$, $\phi_0$
being real-valued. In particular, the $L^2$-norm of $\psi_0^\eps$ is
independent of $\eps$. 
Proposition~\ref{prop:nls} then yields, in the case $0<\si\le 1/d$,
\begin{equation}\label{eq:nocaustic}
    \int_{\R^d} |y|^2|\Psi^\eps(t,y)|^2\dy +
    \|\Psi^\eps(t)\|_{L^{2\si+2}(\R^d)}^{2\si+2}\le C_0,\quad \forall
    t\ge 0,
  \end{equation}
  for some $C_0>0$ independent of $\eps$. This estimate indicates
  dispersive properties which are uniform in $\eps$, a phenomenon
  which cannot hold in the linear case
  \begin{equation*}
    i\eps\partial_t \psi^\eps_{\rm lin} + \frac{\eps^2}{2}\Delta
    \psi^\eps_{\rm lin} = V(x)\psi^\eps_{\rm lin} ,\quad
    \psi^\eps_0(x) = a_0(x)e^{i\phi_0(x)/\eps}, 
  \end{equation*}
where the formation of caustics is incompatible with
\eqref{eq:nocaustic}.
Indeed in the linear case, the rapid oscillation are described,
initially, by a Hamilton-Jacobi equation, whose solution may become 
singular in finite time, precisely on the caustic set: this
geometrical phenomenon coincides with the amplification of 
the order of magnitude of $\psi_{\rm lin}^\eps$ in the limit $\eps\to
0$.
While the $L^2$-norm of $\psi_{\rm lin}^\eps$ is independent of (time
and ) $\eps$, its $L^{2\si+2}$-norm blows up like some negative power
of $\eps$. The precise value of this power depends on the geometry of
the caustic (see e.g. \cite{Du,DuBook,JMRMemoir}), and is for instance equal to $-\frac{d\si}{2\si+2} $ in
the case of a focal point (see e.g. \cite{CaBook2}).
In the
case of \eqref{eq:nls}, the Hamilton-Jacobi equation is replaced
 by a compressible Euler equation (\eqref{fluide1}-\eqref{fluide2}
 with $\eps=\nu=0$, where $\lambda$, $\sigma$ and $\gamma$ are related like
 in \eqref{eq:madelung}), whose solution may
develop singularities in finite time,  from \cite{MUK86}. However,
there is no amplification of $\Psi^\eps$, at least 
in $L^2\cap L^{2\si+2}$. This suggests that the notion of caustic must
be adapted in this case, for the geometrical phenomenon and the
analytical phenomenon, which coincide in the linear case, no longer do:
the nonlinearity in \eqref{eq:nls} 
prevents the amplification phenomenon.

\subsection{Proof of Proposition~\ref{prop:wave-op-korteweg}}
\label{sec:wave-op-korteweg}
Proposition~\ref{prop:wave-op-korteweg} is actually valid for more general
profiles, $a_\infty\in
\Sigma$. 
Let $\eps>0=\nu $, $\gamma$ like in
Proposition~\ref{prop:wave-op-korteweg}, and $a_\infty\in \Sigma $. Define
$\psi_+^\eps$ by
\begin{equation*}
  a_\infty(x)=\frac{1}{\eps^{d/2}}\hat
  \psi_+^\eps\(\frac{x}{\eps}\),\quad\text{where} \quad \hat f(\xi)=
  \frac{1}{(2\pi)^{d/2}} \int_{\R^d} e^{-ix\cdot\xi}f(x)\dx, \quad
  f\in \mathcal S(\R^d).
\end{equation*}
Since $\Sigma = H^1\cap \mathcal F(H^1)$, $\psi^\eps_+\in \Sigma$. 
Standard
scattering theory for NLS (see e.g. \cite{CazCourant,GinibreDEA})
implies that there exists a unique solution $\psi^\eps\in
C(\R_+;\Sigma)\cap L^{\frac{4\si+4}{d\si}}(\R_+;L^{2\si+2}(\R^d))$ to
\eqref{eq:nls}, with
\begin{equation*}
  \lambda =\frac{\gamma}{\gamma-1}>0,\quad \si =\frac{\gamma-1}{2},
\end{equation*}
such that
\begin{equation*}
  \|e^{-i\eps\frac{t}{2}\Delta} \psi^\eps(t)-\psi_+\|_{\Sigma}\Tend t
    \infty 0.
  \end{equation*}
  Since $e^{i\eps\frac{t}{2}\Delta} $ is unitary on $L^2(\R^d)$, this
  implies
  \begin{equation*}
    \| \psi^\eps(t)-e^{i\eps\frac{t}{2}\Delta}\psi_+\|_{L^2(\R^d)}\Tend t
    \infty 0.
  \end{equation*}
  On the other hand (see e.g. \cite{Tsutsumi85}),
   \begin{align*}
    \| e^{i\eps\frac{t}{2}\Delta}\psi^\eps_+-A(\psi^\eps_+)(t)\|_{L^2(\R^d)}\Tend t
    \infty 0,\quad \text{where}\quad
          A(\psi_+)(t,x) &=
    \frac{1}{(i\eps t)^{d/2}}\hat
    \psi_+^\eps\(\frac{x}{\eps t}\)e^{i\frac{|x|^2}{2\eps
        t}}\\
    &=\frac{1}{(i t)^{d/2}}a_\infty\(\frac{x}{ t}\)e^{i\frac{|x|^2}{2\eps t} }.
    \end{align*}
  We infer, from Cauchy--Schwarz and triangle inequalities,
    \begin{align*}
      \left\| |\psi^\eps(t)|^2 -
      \frac{1}{t^d}\left|a_\infty\(\frac{x}{t}\)\right|^2\right\|_{L^1(\R^d)}&=\left\|
       |\psi^\eps(t)|^2   - 
      \left|A(\psi_+^\eps)(t)\right|^2\right\|_{L^1(\R^d)}\\
      &\le
  \( \left\| \psi^\eps (t)\right\|_{L^2(\R^d)} +
      \left\|  A(\psi_+^\eps)(t)\right\|_{L^2(\R^d)}  \)  \left\| \psi^\eps (t) -
 A(\psi_+^\eps)(t)\right\|_{L^2(\R^d)}\\
      &\le 2\|\psi_+^\eps\|_{L^2(\R^d)} \left\| \psi^\eps (t) -
 A(\psi_+^\eps)(t)\right\|_{L^2(\R^d)}\Tend t \infty 0,
    \end{align*}
hence Proposition~\ref{prop:wave-op-korteweg} by defining $(\ro,u)$ by
Madelung transform \eqref{eq:madelung}, so it solves the
Euler-Korteweg system.

%%%%%%%%%%%%%%%%%%%%%%%

\section{Proof of Theorem~\ref{theo:existence}}
\label{sec:existence}

We end the paper with the proof of {\bf Theorem~\ref{theo:existence}}. 
This section is split into three subsections corresponding to the three different cases 
in {\bf Theorem~\ref{theo:existence}}.
\subsection{Euler}

The first case of Theorem~\ref{theo:existence} is simply a
reformulation of the main result from \cite{Gra98}. The assumption
made on $u_0$ ensures that the (multidimensional) Burgers equation
\begin{equation*}
  \partial_t \bar u +\bar u\cdot \nabla \bar u=0,\quad \bar u_{\mid
    t=0}= u_0,
\end{equation*}
has a unique, global solution for $t\ge 0$. A typical example is
$u_0(x)=x$. Then \cite[Theorem~1]{Gra98} asserts that the Euler
equation \eqref{fluide1}-\eqref{fluide2} has a global smooth solution such that
\begin{equation*}
  \ro^{\frac{\gamma-1}{2}},u-\bar u\in
  C^j([0,\infty[;H^{s-j}(\R^d)),\quad j=0,1.
\end{equation*}
At this level of
regularity, the unknowns $(R,U)$ obtained from $(\ro,u)$ through the change of unknown
\eqref{eq:uvFluid} satisfy a fortiori (H1)--(H3). Moreover, when $\alpha = \min(2,d(\gamma-1))$  all the formal manipulations
leading to \eqref{eq_dEEt0}-\eqref{RU:decay} and \eqref{Ry2} are rigorously justified, and so (H4) is
satisfied too.

\begin{remark}
  In \cite{Serre97}, the assumption on $\gamma$ is restricted to
  $1<\gamma\le 1+2/d$, $\ro_0$ need not be compactly supported, and the
  assumption on $u_0$ reads $v_0\in H^s(\R^d)$ with
  $\|v_0\|_{H^s(\R^)}\ll 1$, where $v_0(x)=u_0(x)-x$. The conclusion
  is then the same as above, with $\bar u$ replaced by
  \begin{equation*}
    \bar u(t,x) = \frac{x}{t+1},
  \end{equation*}
  which is a particular solution of the Burgers equation.
  Therefore, 
  the assumption on $\ro_0$ is slightly weaker, but the assumption on
  $u_0$ appears to be a particular case of the framework considered in
  \cite{Gra98}. 
\end{remark}
\subsection{Euler-Korteweg}
The second case of Theorem~\ref{theo:existence} is a consequence of
Madelung transform \eqref{eq:madelung} and of the identities presented
in Section~\ref{sec:nls}. Indeed the assumption
$1<\gamma<1+\frac{4}{(d-2)_+}$ from Theorem~\ref{theo:existence}
corresponds to $0<\si <\frac{2}{(d-2)_+}$ in \eqref{eq:nls}, with
$\lambda>0$ (referred to as defocusing case). Since we assume
$\psi_0\in \Sigma$, standard Cauchy theory for  \eqref{eq:nls} (see
e.g. \cite{CazCourant}) yields the existence of a unique solution
 \begin{equation*}
    \psi^\eps \in C(\R;\Sigma)\cap L^{\frac{4\si+4}{d\si}}_{\rm
      loc}(\R;L^{2\si+2}(\R^d)). 
  \end{equation*}
  In particular, we also have
  \begin{equation*}
    \Psi^\eps \in C(\R;\Sigma)\cap L^{\frac{4\si+4}{d\si}}_{\rm
      loc}(\R;L^{2\si+2}(\R^d)). 
  \end{equation*}
  As noticed in Section~\ref{sec:nls}, $R=|\Psi^\eps|^2$ and
  $RU=\eps\IM(\bar\Psi^\eps\nabla \Psi^\eps)$, and (H1)--(H3) are
  satisfied. In addition (see \cite{AnMa09} or \cite{CaDaSa12}), 
  \begin{equation*}
   \eps^2 |\nabla\Psi^\eps|^2 = |\eps\nabla \sqrt R|^2+ R|U|^2.
  \end{equation*}
  Therefore  \eqref{eq:energyPsi} corresponds exactly to
  \eqref{pseudo-energy}, and {\bf Proposition~\ref{prop:nls}} shows that
  (H4) is satisfied.

\subsection{Navier-Stokes}

We proceed with case (iii) of {\bf Theorem \ref{theo:existence}} and
consider the Navier-Stokes system with capital-letter unknowns
\eqref{RU1}-\eqref{RU2} in dimension $d \leq 3,$ where $\nu > 0$ and
$\varepsilon \geq 0.$ 
The weak regularity statements of (H1) are not sufficient to prove the decay estimates of (H4) with multiplier arguments as in the introduction. We need to obtain these estimates ``by construction'' so that we provide here a brief proof of existence of weak solutions satisfying (H1)-(H4).
We note  that, up to time-dependent scaling
terms, 
\eqref{RU1}-\eqref{RU2}
is similar to the barotropic (quantum)
Navier-Stokes system. Construction of weak solution to this
system goes back to \cite{Lio98,FNP01} (Newtonian Navier-Stokes, i.e.\ non-degenerate viscosity), to \cite{Bre-De-CKL-03,Jungel} (degenerate viscosity) in a framework allowing the presence of vacuum thanks to a suitable setting of the problem, and is by now well-documented, see e.g.\
\cite{VasseurYuInventiones,LacroixVasseur,CCH2,AntonelliHientzschSpirito} and references therein
(see also \cite{RoussetBBK}). We emphasize that one of the
specificities of \cite{CCH2} is to construct solutions on $\R^d$,
instead of $\T^d$ like in most of the references. This approach has
been used in  \cite{AntonelliHientzschSpirito}  in the polytropic
case on $\R^d$. 
Our proof --- that weak solutions to \eqref{RU1}-\eqref{RU2} satisfying (H1)-(H4) do exist --- reproduces the strategy  of \cite{CCH2} (isothermal case on $\R^d$), 
which in turn is based on the approach of \cite{VasseurYuInventiones,LacroixVasseur} (polytropic case on the torus).
As a consequence, we do not give precise details. We only perform the formal energy estimates justifying our definition of weak solutions, give a scheme of the proof and explain how (H4) is achieved.

\subsubsection{Definition of weak solutions}
The system \eqref{RU1}-\eqref{RU2} is classically endowed with conservation \eqref{massR}
and dissipation estimate \eqref{dEEdt}. Such estimates are not sufficient to build up a satisfactory weak solution theory. These pieces of information are complemented with the decay of the by-now
called ``BD-entropy'' (see \cite{BD07}, among others). To construct
this new quantity, we differentiate \eqref{RU1} with respect to
space, and find:
\begin{equation} \label{RU3}
 \partial_t \(R \nabla \ln R\) + \dfrac{1}{\tau^2} {\rm div} (R \nabla \ln (R) \otimes U) + \dfrac{1}{\tau^2} {\rm div} (R \nabla U) = 0.
\end{equation}
The key-remark from \cite{BD07} here is that  the last term in this equation may combine with the Newtonian tensor in the moment equation \eqref{RU2}. So, we multiply \eqref{RU3} by $\nu$ and 
combine with \eqref{RU2}. Denoting $V = U + \nu \nabla \ln(R)$, we obtain:
\begin{multline} \label{RU4}
\partial_t (RV) + \dfrac{1}{\tau^2}{\rm div}(R V \otimes U) + \dfrac{\alpha}{2\tau^{\alpha}} yR 
+ \dfrac{1}{\tau^{d(\gamma-1)}} \nabla R^{\gamma} 
\\
= \dfrac{1}{\tau^2} {\rm div} \left( \dfrac{\varepsilon^2}{2} \mathbb K[R] + \nu \sqrt{R} \mathcal A[R,U] \right) + \dfrac{\nu \dot{\tau}}{\tau} \nabla R,
\end{multline}
where $\mathcal A= \sqrt{R} \A U.$ We perform then a classical energy estimate on this new equation by multiplying with $V/\tau^2.$ The two first terms yield the time-derivative of 
the kinetic energy associated with $V.$ The other terms are integrating by parts by splitting
$V = U + \nu \nabla\ln (R)$ and remarking that, for symmetry reasons, we have:
\[
\mathcal A[R,U] : \nabla^2 \ln(R) = 0.
\]
Eventually, we obtain:
\[
\frac{\d}{\dt} \EE_{\mathrm{BD}}[R,U] + \DD_{\mathrm{BD}}[R,U] 
= \frac{\alpha \nu d}{ 2 \tau^{2+\alpha}} \int R + \frac{\nu \dot \tau}{ \tau^3} \int R \Div U ,
\]
where the BD-entropy is defined by
\begin{align*}
\EE_{\mathrm{BD}}[R,U] := \frac{1}{2 \tau^2} \int \left( R|U+\nu \nabla \ln R|^2 + \eps^2 |\nabla \sqrt R|^2 \right) 
+\frac{\alpha}{4\tau^\alpha} \int |y|^2 R \\
+\frac{1}{(\gamma-1) \tau^{d(\gamma-1)}} \int R^\gamma ,
\end{align*}
and the associated nonnegative dissipation is given by
\begin{align*}
\DD_{\mathrm{BD}}[R,U] := & \dfrac{\dot{\tau}}{\tau}
\Biggl[ \frac{1}{\tau^2} \int \left( R|U|^2 + \eps^2 |\nabla \sqrt R|^2 \right) 
+\frac{\alpha^2}{4\tau^\alpha}  \int |y|^2 R
+\frac{d}{\tau^{d(\gamma-1)}} \int R^\gamma  
\Biggr],
\\ 
&+  \frac{\nu}{\tau^4} \int R | \A U|^2
+\frac{\nu \eps^2}{\tau^4} \int R |\nabla^2 \ln R|^2
+ \frac{4\nu}{\tau^{d(\gamma-1) + 2}} \int |\nabla R^{\gamma/2}|^2.
\end{align*}
With this further remark we can now set a definition of weak solution on the basis of all the
a priori bounded energy/entropy/dissipations:

\begin{definition}\label{def:weaksolutions}
Assume $\nu > 0,$ $\gamma > 1$  and $\eps \ge 0$. 
Let $(\sqrt{R_0} , \Lambda_0 = (\sqrt{R} U)_0 ) \in L^2(\R^d)  \times L^2(\R^d)$.
We call global weak
solution to \eqref{RU1}-\eqref{RU2}, associated to the initial data $(\sqrt{R_0} , \Lambda_0= (\sqrt{R} U)_0)$, any pair $(R,U)$ such that there exists a collection $(\sqrt{R},\sqrt{R}U,\mathbb K,\mathbb T)$ satisfying 
\begin{itemize}
\item[i)] The  following regularities:
\begin{align*}
&\(\<y\>+|U|\) \sqrt{R} \in L^{\infty}_{\rm loc}\(0,\infty; L^2 (\R^d)\),\quad \nabla
  \sqrt R \in L^{\infty}_{\rm loc}\(0,\infty; L^2 (\R^d)\),\\ 
 &  \sqrt{R} \in L^{\infty}_{\rm loc}(0,\infty ; L^{2\gamma}(\R^d)) \quad 
  \nabla {R}^{\gamma/2} \in L^2_{\rm loc}(0,\infty;L^2(\R^d)),
\\  
& \eps \nabla^2 \sqrt{R} \in L^2_{\rm loc}(0,\infty;L^2(\mathbb
  R^d)) ,\quad 
  \sqrt{\eps} \nabla R^{1/4}\in L^4_{\rm loc}(0,\infty;L^4(\mathbb
  R^d)),\\
&\mathbb T \in L^2_{\rm loc}(0,\infty; L^2(\mathbb R^d)) ,
\end{align*}
with the compatibility conditions
\[
\sqrt{R} \ge 0 \text{ a.e. on } (0,\infty)\times \R^d,  \quad   \sqrt{R}U=
0 \text{ a.e. on } \{\sqrt{R} = 0 \}.
\]
\item[ii)] The following equations in $\mathcal D'((0,\infty)\times \mathbb R^d)$
\begin{equation}\label{eq:NSKrevu}
  \left\{
    \begin{aligned}
  & \partial_t\sqrt{R}+\frac{1}{\tau^2}\Div (\sqrt{R} U )=
  \frac{1}{2\tau^2}{\rm Trace}(\mathbb T),\\
    &\partial_t ({R}U) +\frac{1}{\tau^2}\Div ( \sqrt{R}U \otimes \sqrt{R}U)
      +2 y |\sqrt{R}|^2 +\nabla \left( |\sqrt{R}|^2 \right)
      \\
      & \phantom{\partial_t (\sqrt{R} \sqrt{R}U) +\frac{1}{\tau^2}\Div }=\Div \left(
      \dfrac{\eps^2}{2\tau^2} \mathbb K  + \dfrac{\nu}{\tau^2} \sqrt R 
\mathbb T^s
        \right) +
      \dfrac{\nu \dot{\tau}}{\tau} \nabla R,
    \end{aligned}
\right.
\end{equation}
with 
$\mathbb T^s$
the symmetric part of $\mathbb T$ and the compatibility conditions:
\begin{align} \label{eq_compnewton}
& \sqrt{R}\mathbb T = \nabla(\sqrt{R}  \sqrt{R} U) - 2 \sqrt{R}U
  \otimes \nabla \sqrt{R}\,, \\[6pt] 
& \mathbb K 
=\sqrt{R}\nabla^2 \sqrt{R} -  \nabla \sqrt{R} \otimes \nabla \sqrt{R}
  \,. \label{eq_compkorteweg} 
\end{align}
\item[iii)] For any $\psi\in C_0^\infty(\R^d)$, 
  \begin{align*}
    &\lim_{t\to 0}\int_{\R^d} \sqrt{R}(t,y)\psi(y) \, \dd y=
    \int_{\R^d} \sqrt{R_0} (y) \psi(y)  \, \dd y , \\
& \lim_{t\to 0}\int_{\R^d} \sqrt{R}(t,y)    (\sqrt{R}U)(t,y)\psi(y) \, \dd y=
    \int_{\R^d} \sqrt{R_0} (y) \Lambda_0(y) \psi(y) \, \dd y.
  \end{align*}
\end{itemize}
\end{definition}

We point out that this definition is readily adapted from \cite[Definition 1.1]{CCH2}, where the isothermal case $\gamma=1$ is considered.
It is also similar to \cite[Definition 2.1]{AntonelliHientzschSpirito}.
The third existence statement in {\bf Theorem \ref{theo:existence}} is then 
a straightforward consequence of:

\begin{proposition} \label{prop:existence}
  Assume $\nu>0,$ $\gamma >1$ and $\eps\ge 0$.
   Let $(\sqrt{R_0} , \Lambda_0 = (\sqrt{R} U)_0 ) \in L^2(\R^d) \times L^2(\R^d)$ satisfy  the compatibility conditions
\[
\sqrt{R_0} \ge 0 \text{ a.e.\ on } \R^d,  \quad   (\sqrt{R}U)_0=
0 \text{ a.e.\ on } \{\sqrt{R_0} = 0 \},
\]
as well as $\EE[R_0,U_0] < \infty$, $\EE_{\mathrm{BD}}[R_0,U_0] < \infty$.
There exists  at least one global weak solution to \eqref{RU1}-\eqref{RU2} 
in the sense of Definition~\ref{def:weaksolutions},
which satisfies
(H1)-(H4) and the conservation of mass.
\end{proposition}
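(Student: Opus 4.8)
The plan is to follow the well-documented approximation scheme for degenerate (quantum) Navier--Stokes systems on $\R^d$, adapting the construction of \cite{CCH2} to the polytropic pressure and to the extra time-dependent scaling coefficients appearing in \eqref{RU1}-\eqref{RU2}. Concretely, I would first build approximate solutions by regularizing the system: add an artificial viscosity $\eta\Delta R$ in the continuity equation, a density-dependent drag term, a cold-pressure term ensuring lower bounds away from vacuum, and a small elliptic regularization of the momentum equation (the usual Bresch--Desjardins--Zatorska / Vasseur--Yu apparatus). On this regularized level the unknowns are smooth and strictly positive, so all the integrations by parts leading to the energy identity \eqref{dEEdt} and to the BD-entropy identity derived above are rigorous. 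One then absorbs the right-hand side terms $\tfrac{\alpha\nu d}{2\tau^{2+\alpha}}\int R$ and $\tfrac{\nu\dot\tau}{\tau^3}\int R\,\Div U$ exactly as in the formal computation preceding (H4): the first is integrable in time since $\tau\sim t$, and the second is controlled by $\tfrac{\nu}{2\tau^2}\int R|U|^2+\tfrac{C\nu\dot\tau^2}{\tau^2}\int R$, half of which is swallowed by $\DD_{\mathrm{BD}}$ while the remainder is again time-integrable by Lemma~\ref{lem:tau}. This yields, uniformly in the regularization parameters, the bounds $\sup_t\EE+\int_0^\infty\DD<\infty$ and $\sup_t\EE_{\mathrm{BD}}+\int_0^\infty\DD_{\mathrm{BD}}<\infty$, together with the differential inequality $\tfrac{\d}{\dt}\EE\le-\alpha\tfrac{\dot\tau}{\tau}\EE+2\nu\tfrac{\dot\tau^2}{\tau^2}$ with $\alpha=\min(2,d(\gamma-1))$, which integrates to \eqref{eq_diss1}. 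The second moment bound \eqref{eq_diss2} is obtained from \eqref{Ry2}, i.e.\ by testing the continuity equation against $|y|^2$ and using $\int_0^\infty\tfrac{1}{\tau}\sqrt{\EE}\,\dt<\infty$, which holds because $\sqrt{\EE}\lesssim(1+t)^{-\alpha/2}+\sqrt\nu(1+t)^{-1/2}(1+\ln(1+t))^{1/2}$ decays fast enough against $\tau\sim t$.

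Next I would pass to the limit in the regularization parameters in the order dictated by the structure (typically: first the elliptic momentum regularization, then the drag, then the cold pressure, then the artificial viscosity $\eta$), recovering at each stage the next model in the hierarchy. The compactness ingredients are standard and all come from the uniform bounds just obtained: $\sqrt R$ bounded in $L^\infty_{\rm loc}(0,\infty;H^1)$ (and in $L^\infty_{\rm loc}(0,\infty;L^{2\gamma})$ via the pressure term), $\nabla R^{\gamma/2}\in L^2_{\rm loc}$, $\sqrt R U$ bounded in $L^\infty_{\rm loc}(0,\infty;L^2)$, and $\mathbb T\in L^2_{\rm loc}$ controlling $\nabla(\sqrt R\sqrt R U)$; the continuity equation then gives $\partial_t\sqrt R$ bounded in a negative Sobolev space, so Aubin--Lions yields strong convergence of $\sqrt R$ in $L^2_{\rm loc}$, which is exactly what is needed to identify the quadratic terms $\sqrt R U\otimes\sqrt R U$, $R^\gamma$, $\mathbb K$, and the compatibility conditions \eqref{eq_compnewton}-\eqref{eq_compkorteweg} in the limit, precisely as in \cite{VasseurYuInventiones,LacroixVasseur,CCH2}. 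Since $d\le 3$, the Sobolev exponents needed for these nonlinear identifications (in particular controlling $R^\gamma$ and the Korteweg tensor when $\eps>0$) are available. The energy and BD-entropy inequalities survive the limits by weak lower semicontinuity of the relevant convex functionals, so the limit $(\sqrt R,\sqrt RU,\mathbb K,\mathbb T)$ is a weak solution in the sense of Definition~\ref{def:weaksolutions} satisfying (H1)-(H4); the initial data are attained in the weak sense of item (iii) because of the uniform-in-time continuity with values in $L^p-w$ furnished by the $\partial_t$ bounds, and they are unaffected by the change of unknown \eqref{eq:uvFluid} since $\tau(0)=1$, $\dot\tau(0)=0$.

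The main obstacle is the usual one in this theory: the degeneracy of the viscosity at vacuum, which prevents a direct control of $\nabla U$ and forces one to work with $\sqrt R U$, $\sqrt R\nabla U$ (the symbol $\mathbb T$), and the BD-entropy variable $V=U+\nu\nabla\ln R$ rather than with $U$ itself. Passing to the limit in $\sqrt R U\otimes\sqrt R U$ and in the diffusion term $\sqrt R\mathbb T^s$ while keeping track of the compatibility conditions requires the delicate Mellet--Vasseur-type estimate to upgrade weak convergence of the momentum and genuinely ``nonlinear'' compactness near the vacuum set; reproducing this on $\R^d$ (as opposed to $\T^d$) demands the second-moment and tightness control, which is exactly why the $(\ro|x|^2)$-type bound \eqref{eq_diss2} is built into the scheme from the start. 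A secondary, but purely bookkeeping, difficulty is that the coefficients $\tau(t)^{-2}$, $\tau(t)^{-\alpha}$, $\tau(t)^{-d(\gamma-1)}$, $\dot\tau/\tau$ are time-dependent; since they are smooth, bounded on compact time intervals and behave like explicit powers of $t$ at infinity (Lemma~\ref{lem:tau}), they do not affect any of the local-in-time compactness arguments and only enter the global-in-time decay bookkeeping, which has already been carried out at the formal level. For this reason, following \cite{CCH2,AntonelliHientzschSpirito}, I would not reproduce the full approximation scheme but only record the formal a priori estimates (done above), indicate the order of the limits, and invoke the cited constructions for the compactness machinery.
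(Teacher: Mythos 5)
Your proposal follows essentially the same route as the paper: a regularized approximation (artificial viscosity, drag, cold pressure, higher-order smoothing) on which the energy and BD-entropy identities are rigorous, absorption of the right-hand side to derive (H4) uniformly at the approximate level, stability of (H4) under weak limits, and deferral to \cite{CCH2,VasseurYuInventiones,LacroixVasseur} for the compactness machinery identifying the nonlinear terms. The only cosmetic difference is that the paper works on an invading torus $\T^d_\ell$ with $\ell\to\infty$ as the last limit and fixes a slightly different order of removal of the regularization parameters, but since both arguments delegate these steps to the cited constructions this does not affect the substance.
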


We stress that, in this definition, we set:
\[
R_0 = \sqrt{R_0}^2, \qquad U_0 = \dfrac{(\sqrt{R}U)_0}{\sqrt{R}_0} \mathbf{1}_{\sqrt{R}_0 >0}.
\]
This is the common way to define functions of $R$ and $U$ in such a framework. In particular, the
definition of the velocity-field $U$ is satisfactory since we enforce the condition $\sqrt{R}U = 0$
under the condition $\sqrt{R} =0$ in our construction and assumptions.

\subsubsection{Condition (H4) and roadmap of the proof of Proposition \ref{prop:existence}}

The proof of Proposition~\ref{prop:existence} follows the compactness
approach of \cite{CCH2} relying on the key-ingredients
introduced in \cite{BD06b}, and resumed in, e.g., 
\cite{Gis-VV15,VasseurYuInventiones,LacroixVasseur}. We point out that
one important novelty of \cite{CCH2} was to treat the isothermal case while the ingredients of \cite{VasseurYuInventiones,LacroixVasseur} handle the precise polytropic case that we consider herein. 
Consequently, we only point out the roadmap of the proof herein and refer the reader to these previous references for more details on the different ingredients, and how to combine them.

The first step of the proof consists in solving a regularized version of \eqref{RU1}-\eqref{RU2}
on a torus of arbitrary size $\ell >1$, denoted by $\T^d_\ell$. This regularized version is associated with parameters
$r= (r_0,r_1) \in (0,\infty)^2$, $\delta := (\delta_1,\delta_2) \in (0,\infty)^2$,  $(\eta_1,\eta_2) \in (0,\infty)^2$, 
$m>0$ sufficiently large (see \cite{VasseurYu}),
and involves a ``cold-pressure'' exponent $k \in (0,\infty)$ that has to be chosen sufficiently large. This regularized system reads:
\begin{align}
& \partial_t R+\frac{1}{\tau^2}\Div (R U )= \frac{\delta_1}{\tau^2} \Delta R,    \label{RU-drag-reg1}  \\
& \partial_t (R U) +\frac{1}{\tau^2}\Div ( R U \otimes U) +\dfrac{\alpha}{2\tau^{\alpha}}y R  +\dfrac{1}{\tau^{d(\gamma-1)}} \nabla P_c(R) \label{RU-drag-reg2} \\ 
&\quad\qquad 
+ \frac{r_0}{\tau^2} U + \frac{r_1}{\tau^2} R |U|^2 U + \frac{\delta_1}{\tau^2}(\nabla R \cdot \nabla) U  \nonumber \\\notag
&\quad\qquad
=\frac{\eps^2}{2\tau^2}R\nabla \( \frac{\Delta  \sqrt{R}}{\sqrt{R}} \)   
+\frac{\nu}{\tau^2} \Div (R \D U) + \frac{\nu \dot \tau}{\tau} \nabla R
+\frac{\delta_2}{\tau^2} \Delta^2 U  + \frac{\eta_2}{\tau^2} R \nabla
   \Delta^{2m+1} R  ,
\end{align}
where:
\[
P_c(R) = R^{\gamma} - \dfrac{\eta_1}{R^{k}}.
\]  
By a suitable truncation/regularization of the initial condition,
the regularized system is solved when completed with initial data $(R_0,U_0)$ satisfying:
\begin{equation}\label{RUinitial-theta2}
 R_0 \in C^{\infty}(\mathbb T^d_{\ell}), \quad U_0 \in L^2(\mathbb T^d_{\ell}), \quad \inf_{y \in \T^d_\ell} R_0(y) \ge \theta >0 .
\end{equation}
The remaining steps of the analysis consist in letting successively $|\delta| \to 0,$ $|\eta| \to 0$
and  then $|r| \to 0,$ 
$\theta\to 0$, 
$\ell \to \infty$ (and possibly $\eps \to 0$).   
The following lemma ensures that assumption (H4) is satisfied at the level of the approximation:

\begin{lemma}
Given initial data $(R_0,U_0)$ satisfying \eqref{RUinitial-theta2}, there exists a global solution $(R,U)$ to \eqref{RU-drag-reg1}-\eqref{RU-drag-reg2} associated to $(R_0,U_0)$ on the torus $\T^d_\ell$, which satisfies moreover the  conservation of mass 
and the decay estimate (H4). 
\end{lemma}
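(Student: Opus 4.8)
The plan is to construct the global solution $(R,U)$ to the regularized system \eqref{RU-drag-reg1}-\eqref{RU-drag-reg2} by a standard fixed-point/Galerkin scheme on the torus $\T^d_\ell$, and then to verify that the \emph{regularized} energy and BD-entropy identities, together with the elementary decay properties of $\tau$ from Lemma~\ref{lem:tau}, give exactly the estimates required in (H4). Because the density is bounded below by $\theta>0$ on a torus of finite size and because the system is heavily regularized ($\delta_1\Delta R$ parabolicity on the mass equation, $\delta_2\Delta^2 U$ and $\eta_2 R\nabla\Delta^{2m+1}R$ high-order dissipation, cold pressure $\eta_1 R^{-k}$ preventing vacuum, drag terms $r_0 U + r_1 R|U|^2 U$), local well-posedness and propagation of smoothness are classical; the global-in-time extension then follows from the a priori bounds. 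So the first step is simply to invoke the construction of \cite{VasseurYuInventiones,LacroixVasseur,CCH2} (transposed from the barotropic system via the time-dependent scaling) to get a global smooth solution with $\inf_y R(t,y)>0$ for all $t$, conservation of mass \eqref{massR}, and continuity of all relevant norms in time.

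The second step is the energy bookkeeping. Since the solution is smooth and the density is bounded away from zero, every manipulation performed formally in the introduction and in the derivation of the BD-entropy identity is now rigorous. I would first write the regularized pseudo-energy identity: multiplying \eqref{RU-drag-reg2} by $U/\tau^2$ and using \eqref{RU-drag-reg1} gives $\tfrac{\d}{\dt}\EE[R,U] + \DD[R,U] = -\nu\tfrac{\dot\tau}{\tau^3}\int R\Div U$ \emph{plus} nonnegative contributions coming from the regularizing terms ($r_0,r_1,\delta_1,\delta_2,\eta_1,\eta_2$) which appear with favorable sign on the dissipation side (the cold-pressure term $\eta_1 R^{-k}$ only adds a nonnegative piece to the energy), so these can simply be discarded. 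Then the same Cauchy--Schwarz absorption as in \eqref{eq_dEEt0}, using $\int_0^\infty (\dot\tau/\tau)^2\,\dt<\infty$, yields $\sup_t\EE(t)+\int_0^\infty\DD(t)\,\dt\le C(\EE_0)$ uniformly in the regularization parameters. Choosing $\alpha=\min(2,d(\gamma-1))$ as in (H4), the inequality $\DD\ge\alpha\tfrac{\dot\tau}{\tau}\EE$ from \eqref{dissipationRU} together with the differential inequality \eqref{eq_dEEt2}-\eqref{RU:decay} gives the decay bound \eqref{eq_diss1}; the second-moment bound \eqref{eq_diss2} comes from \eqref{Ry2}, i.e. $\tfrac{\d}{\dt}(\int|y|^2R)^{1/2}\le C\tau^{-1}\sqrt{\EE}$, integrated using $\tau\sim t$ and the decay of $\EE$. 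One must also record the BD-entropy identity for $V=U+\nu\nabla\ln R$ to control $\nabla\sqrt R$ and $\nabla R^{\gamma/2}$, but at this regularized level it contributes only further bounded quantities, not needed for (H4) itself.

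The main obstacle — and the reason this lemma is stated separately rather than folded into the limit passage — is \textbf{uniformity of the (H4) constants in the regularization parameters} $\delta,\eta,r,\theta,\ell$. One must check that the extra terms generated on the right-hand side of the energy/entropy identities by the regularizations are either signed (and hence harmless) or controlled by quantities already appearing in $\DD$; in particular the term $\delta_1(\nabla R\cdot\nabla)U$ in \eqref{RU-drag-reg2} and the interplay of $\delta_1\Delta R$ with the convective and Korteweg terms must be handled so that no $\delta_1$-dependent constant survives, and the cold-pressure exponent $k$ and the hyper-dissipation order $m$ must be taken large enough (as in \cite{VasseurYu}) for the relevant integrations by parts and Sobolev embeddings to close on $\T^d_\ell$. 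Once this is done, the constant $C_0$ in (H4) depends only on $\EE[R_0,U_0]$, $\EE_{\mathrm{BD}}[R_0,U_0]$, $\alpha$, $\nu$, $\eps$ and $d$, which is what is needed for the subsequent compactness argument as the parameters are sent to their limits. I would therefore present the proof as: (i) cite the construction for existence/smoothness/positivity; (ii) derive the regularized $\EE$- and $\EE_{\mathrm{BD}}$-identities and discard signed regularizing terms; (iii) run the absorption and the differential-inequality argument to get \eqref{eq_diss1}; (iv) run the second-moment estimate \eqref{Ry2} to get \eqref{eq_diss2}; (v) note that all constants are independent of $\delta,\eta,r,\theta,\ell$.
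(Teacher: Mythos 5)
Your architecture coincides with the paper's: a Faedo--Galerkin construction (which the paper also delegates to \cite{CCH2}), the regularized energy identity obtained by multiplying \eqref{RU-drag-reg2} by $U/\tau^2$, absorption of the right-hand side using $\int_0^\infty(\dot\tau/\tau)^2\,\dt<\infty$, the inequality $\DD_{\mathrm{reg}}\ge\alpha\tfrac{\dot\tau}{\tau}\EE_{\mathrm{reg}}$ with $\alpha=\min(2,d(\gamma-1))$ leading to \eqref{eq_diss1}, the second-moment estimate \eqref{Ry2} leading to \eqref{eq_diss2}, and finally $\EE\le\EE_{\mathrm{reg}}$, $\DD\le\DD_{\mathrm{reg}}$ to transfer (H4) to the original functionals. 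Your observation that the BD-entropy is not needed for (H4) itself also matches the paper, which invokes it only to define the weak-solution class.

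There is, however, one concrete gap. You assert that the regularizing terms ``appear with favorable sign on the dissipation side \dots\ so these can simply be discarded.'' This fails for the contribution of $\tfrac{\delta_1}{\tau^2}\Delta R$ in the mass equation \eqref{RU-drag-reg1} when it is paired with the confinement part $\tfrac{\alpha}{4\tau^\alpha}\int|y|^2R$ of the pseudo-energy: two integrations by parts produce the \emph{positive source}
\begin{equation*}
\frac{\alpha\delta_1}{4\tau^{2+\alpha}}\int_{\T^d_\ell}|y|^2\,\Delta R
\;=\;\frac{\alpha d\,\delta_1}{2\tau^{2+\alpha}}\int_{\T^d_\ell}R
\end{equation*}
on the right-hand side of the energy balance --- this is precisely the first term on the right of \eqref{eq_dissreg}. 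It is not signed favorably, cannot be discarded, and does not automatically come with a $\delta_1$-uniform, time-integrable bound of the right strength. The paper closes this by restricting to $\delta_1<\nu$, so that $\tfrac{\alpha d\delta_1}{\tau^{2+\alpha}}+\nu(\dot\tau/\tau)^2\le C_\alpha\nu/\tau^2$ is integrable in time and is absorbed together with the $-\nu\tfrac{\dot\tau}{\tau^3}\int R\,\Div U$ term, yielding a constant depending only on the initial mass. You do flag ``the interplay of $\delta_1\Delta R$ with the convective and Korteweg terms'' as an obstacle, but you neither identify this specific term nor supply the restriction $\delta_1<\nu$ that resolves it; as written, step (ii) of your plan is incomplete at exactly this point.
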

The property (H4) being stable by weak convergence, the solution we construct inherits this property.

\begin{proof}
As in \cite[Section 2]{CCH2}, existence of solutions to \eqref{RU-drag-reg1}-\eqref{RU-drag-reg2} 
(with regularized initial data) is obtained {\em via} a Faedo-Galerkin approach. Namely,
the velocity-field $U$ is first chosen in a finite-dimensional
subspace of $L^2(\mathbb T^d_{\ell})$, 
Equation \eqref{RU-drag-reg2} being projected on this subspace, and \eqref{RU-drag-reg1} solved independently via a fixed-point argument. Again, we argue at the level of the finite-dimensional approximation, the same inequalities being satisfied by any limit of these approximations.

\medskip

Since the continuity equation \eqref{RU-drag-reg1} is satisfied pointwise, we have readily:
\[
\int_{\mathbb T^d_{\ell}} R(t,\cdot) = \int_{\mathbb T^d_{\ell}} R_0, \quad \forall \, t >0. 
\]
Here $R_0$ should be thought of as the regularized initial data, but
the regularization procedure ensures convergence of the mass of the
regularized approximation to the mass of $R_0.$ 
We obtain \eqref{massR}.

\medskip

At the level of the projection, all solutions are smooth in space and $C^1$ in time. Multiplying \eqref{RU-drag-reg2} by $U$ is then fully justified.  Similarly to the computation of dissipation estimate for the full system, 
we obtain (see also \cite[Proposition 2.6]{CCH2})  the following decay estimate:
\begin{equation} \label{eq_dissreg}
\frac{\d}{\dt} \EE_{\mathrm{reg}}[R,U] + \DD_{\mathrm{reg}}[R,U] = { \frac{\alpha d \delta_1}{ 2 \tau^{2+\alpha}}} \int R - \frac{\nu \dot \tau}{ \tau^3} \int R \Div U 
\end{equation}
where:
\[
\begin{aligned}
\EE_{\mathrm{reg}}[R,U] &=  \frac{1}{2\tau^2} \int_{\T^d_\ell} \left( R|U|^2 + \eps^2 |\nabla \sqrt{R}|^2 + \eta_2 \int_{\T^d_\ell} |\nabla \Delta^m R|^2 \right) \\
& \quad + \dfrac{\alpha}{4 \tau^{\alpha}} \int_{\T^d_\ell}  R|y|^2  + \dfrac{1}{\tau^{d(\gamma-1)}}  \int_{\T^d_{\ell}} \left( \dfrac{1}{\gamma-1}R^{\gamma}  + \dfrac{\eta_1}{k+1} \dfrac{1}{R^{k}}\right), \\
\end{aligned}
\]
and 
\[
\begin{aligned}
\DD_{\mathrm{reg}}[R,U] 
&= \frac{\dot \tau}{\tau}
\Biggl[ \dfrac{1}{\tau^2}\int_{\T^d_\ell}\left( R|U|^2 + \eps^2 |\nabla \sqrt{R}|^2+ \eta_2 |\nabla \Delta^m R|^2 \right)  \\
& \quad + \dfrac{\alpha^2}{4 \tau^{\alpha}} \int_{\T^d_\ell}  R|y|^2  + \dfrac{d(\gamma-1)}{\tau^{d(\gamma-1)}}  \int_{\T^d_{\ell}} \left( \dfrac{1}{\gamma-1}R^{\gamma}  + \dfrac{\eta_1}{k+1} \dfrac{1}{R^{k}}\right) \Biggr]  \\
&\quad + \frac{\nu}{\tau^4} \int_{\T^d_\ell} R |\D U|^2 
+\frac{\delta_2}{\tau^4}   \int_{\T^d_\ell} |\Delta U|^2
+ \frac{\delta_1 \eta_2}{\tau^4} \int_{\T^d_\ell} |\Delta^{m+1} R|^2 
\\
& \quad + \frac{\delta_1}{ \tau^{2+d(\gamma-1)}} \int_{\T^d_\ell} \left( \gamma R^{\gamma-2} + \dfrac{\eta_1 k}{R^{k+2}} \right) |\nabla R|^2  
\\
& \quad 
+ \frac{r_0}{\tau^4} \int_{\T^d_\ell} |U|^2
+ \frac{r_1}{\tau^4} \int_{\T^d_\ell}  R |U|^4 
+ \frac{\delta_1 \eps^2}{2\tau^4} \int_{\T^d_\ell} R| \nabla^2 \ln R|^2.
\end{aligned}
\]

At this point, we adapt the arguments of the introduction.
First we remark that the right-hand side $RHS$ of \eqref{eq_dissreg} satisfies:
\begin{equation} \label{eq_RHS}
RHS \leq \left( \dfrac{\alpha d\delta_1 }{\tau^{2 + \alpha}} + \nu \left( \dfrac{\dot \tau}{\tau}\right)^2\right) \int_{\mathbb T^{d}_{\ell}} R_0 + \dfrac{1}{2} \DD_{\mathrm{reg}}[R,U] .
\end{equation}
Again, here $R_0$ should be the regularized initial data but the
approximation procedure ensures convergence of the initial data in a
sense that is sufficient to guarantee that all constants involving
initial data are bounded by a constant depending on the initial data of the target system (see \cite[Section 4.2]{CCH2}). In particular, the
mass of the initial data can be assumed to be bounded by a constant $C_0$ depending only on initial data.

The inequality \eqref{eq_dissreg} then yields:
\[
\frac{\d}{\dt} \EE_{\mathrm{reg}}[R,U] 
+ \mathcal{D}_{\mathrm{reg}}[R,U] 
 \leq  C_0 \left( \dfrac{\alpha d \delta_1}{\tau^{2 + \alpha}} + \nu \left( \dfrac{\dot \tau}{\tau}\right)^2\right) .
\]
When $\delta_1 < \nu$, we can bound the right-hand side with a constant $C_{\alpha}$
depending only on $\alpha$:
\[
 \dfrac{\alpha d \delta_1}{\tau^{2 + \alpha}} + \nu \left( \dfrac{\dot \tau}{\tau}\right)^2
\leq \dfrac{C_{\alpha}\nu}{\tau^2} .
\]
Integration in time yields an $L^1(0,\infty)$-bound on $\mathcal{D}_{\mathrm{reg}}[R,U]$. We can then choose $\alpha = \min(2,d(\gamma-1))$ and argue as in the introduction
that:
\[
\DD_{\mathrm{reg}}[R,0] \geq \alpha \dfrac{\dot{\tau}}{\tau} \EE_{\mathrm{reg}}[R,U],
\] 
to  yield that $\EE_{\mathrm{reg}}[R,U]$ satisfies \eqref{eq_diss1}, and reproduce the computations of  \eqref{Ry2}.
We finally conclude that (H4) is satisfied by remarking that:
\[
\EE \leq \EE_{\mathrm{reg}}[R,U], \quad  \DD \leq \DD_{\mathrm{reg}}[R,U] .
\]
\end{proof}

%%%%%%%%%%%%%%%%%%

\appendix

\section{Computations of formal energy estimates}
\label{sec:computations}

In this section, we consider $(\rho,u)$ a solution to \eqref{fluide1}-\eqref{fluide2} and justify
that at least formally, the decay estimate \eqref{rhou:decay} should be satisfied. Define the functional
\[
\begin{aligned}
A[\ro,u] 
&:=  t^2 E[\ro,u] - \int_{\R^d} t \ro u \cdot x 
+ \frac12 \int_{\R^d} \ro |x|^2 \\
&= \frac12 \int_{\R^d} \left( \ro \left|t u - {x}\right|^2 + t^2 \eps^2 |\nabla \sqrt \ro|^2 \right)  + \frac{t^2}{\gamma-1}\int_{\R^d} \ro^\gamma,
\end{aligned}
\]
where we recall that the energy $E[\ro,u]$ is defined in \eqref{energy}. A straightforward computation gives us
\[
\begin{aligned}
\frac{\d}{\dt} A[\ro,u] 
&= 2 t E[\ro,u] + t^2 \frac{\d}{\dt} E[\ro,u]
- t \int_{\R^d} \ro |u|^2 - t d \int_{\R^d} \ro^\gamma  \\
&\quad
-t {\eps^2} \int_{\R^d} |\nabla \sqrt \ro |^2 
+t \nu \int_{\R^d} \ro \Div u.
\end{aligned}
\]
Thanks to \eqref{dEdt} we then get
\[
\begin{aligned}
\frac{\d}{\dt} A[\ro,u] 
&= \frac{t}{\gamma-1} (2-d(\gamma-1)) \int_{\R^d} \ro^\gamma
-t^2 D[\ro,u] 
+t \nu \int_{\R^d} \ro \Div u.
\end{aligned}
\]
We now define the functional
\[
B[\ro,u] := \frac{1}{t^2} A[\ro,u]= \frac12 \int_{\R^d} \left( \ro \left|u - \frac{x}{t}\right|^2 + \eps^2 |\nabla \sqrt \ro|^2  \right)+ \frac{1}{\gamma-1}\int_{\R^d} \ro^\gamma 
\]
and we obtain, using previous computation, that
\[
\begin{aligned}
\frac{\d}{\dt}  B[\ro,u]  
&=  \frac{1}{t} \frac{(2-d(\gamma-1))}{\gamma-1} \int_{\R^d} \ro^\gamma
- D[\ro,u] 
+\frac{1}{t} \nu \int_{\R^d} \ro \Div u \\
&\quad
-\frac{2}{t^3} \frac12\int_{\R^d} \ro |tu-x|^2
-\frac{2}{t^3} \frac{t^2\eps^2}{2} \int_{\R^d} |\nabla \sqrt \ro|^2
-\frac{2}{t^3}\frac{t^2}{\gamma-1}\int_{\R^d} \ro^\gamma  \\
&= -\frac{d}{t}  \int_{\R^d} \ro^\gamma 
-\frac{1}{t}  \int_{\R^d} \ro \left|u - \frac{x}{t}\right|^2
-\frac{\eps^2}{t} \int_{\R^d} |\nabla \sqrt \ro |^2
- D[\ro,u]  
+\frac{1}{t} \nu \int_{\R^d} \ro \Div u  .
\end{aligned}
\]
We identify in last expression the terms in the definition of
$B[\ro,u]$, hence, using that $\frac{1}{t} \nu \int_{\R^d} \ro \Div
u\le \frac{\nu}{2t^2} \int_{\R^d} \ro + \frac{\nu}{2} \int_{\R^d} \ro
|\D u|^2$ and the conservation of mass, we deduce
\[
\begin{aligned}
\frac{\d}{\dt}  B[\ro,u] 
&\le -\frac{\min(2,d(\gamma-1))}{t} B[\ro,u]  + \frac{C\nu}{t^2} - \frac{1}{2}D[\ro,u] 
\end{aligned}
\]
for some constant $C>0$.
Therefore, for $t >0$, one has
\begin{equation}\label{rhou:decay_app}
B[\ro,u](t) \le \frac{C(E_0)}{(1+t)^{\min(2,d(\gamma-1))}} + \frac{C \nu }{1+t}.
\end{equation}

\section{Proof of Lemma~\ref{lem:tau}} \label{sec:tau}

\begin{proof}
  The local existence stems directly from the Cauchy-Lipschitz
  theorem. The only possible obstruction to the global propagation of
  regularity is the cancellation of $\tau$, which is impossible in
  view of the relation, obtained after multiplication of \eqref{eq:tau}
  by $\dot\tau$ and integration:
  \begin{equation*}
 \dot \tau(t)^2=
 1-\frac{1}{\tau(t)^{\alpha}}.
\end{equation*}
This implies $\tau(t)\ge 1$. 
Therefore, $\tau\in
C^\infty(\R;\R_+)$. The equation shows that $\tau$ is strictly (but not
uniformly) convex. If it was bounded, $\tau(t)\le M$, then we would
have
\begin{equation*}
  \ddot \tau(t)\ge \frac{\alpha}{2M^{1+\alpha}}>0,
\end{equation*}
hence a contradiction after two integrations. Therefore, $\tau(t_n)\to
\infty$ for some sequence $t_n\to \infty$, and since $\tau$ is convex,
$\tau(t)\to \infty$ as $t\to\infty$. Hence $\dot\tau(t)^2\to 1$, and since $\tau$ is
necessarily increasing for $t\ge 0$, $\dot\tau(t)\to 1$, and the
comparison of diverging integrals yields $\tau(t)\Eq t \infty t$. 
\end{proof}

%%%%%%%%%%%%%%%%%%%%%%%%%%%%%%%
\bibliographystyle{siam}
\bibliography{biblio}

\end{document}